\newcommand{\dynkinradius}{.05cm}
\newcommand{\dynkinstep}{.4cm}
\newcommand{\dynkindot}[2]{\fill (\dynkinstep*#1,\dynkinstep*#2) circle (\dynkinradius);}
\newcommand{\dynkindotss}[2]{\draw[thick,dotted];}
\newcommand{\dynkinline}[4]{\draw[thin] (\dynkinstep*#1,\dynkinstep*#2) -- (\dynkinstep*#3,\dynkinstep*#4);}
\newenvironment{dynkin}{\begin{tikzpicture}[decoration={markings,mark=at position 0.7 with {\arrow{>}}}]}
{\end{tikzpicture}}
\patchcmd{\section}{\scshape}{\bfseries}{}{}
\patchcmd{\subsubsection}{\itshape}{\itshape}{}{}
\renewcommand{\@secnumfont}{\bfseries}
\newtheorem{thm}{Theorem}
\newtheorem*{thm*}{Theorem}
\newtheorem{prop}[thm]{Proposition}
\newtheorem*{prop*}{Proposition}
\newtheorem{lem}[thm]{Lemma}
\newtheorem*{lem*}{Lemma}
\newtheorem*{cor*}{Corollary}
\newtheorem*{que*}{Question}
\theoremstyle{definition}
\newtheorem{Def}{Definition}
\newtheorem*{Def*}{Definition}
\newtheorem{remark}{Remark}
\newtheorem*{remark*}{Remark}
\newcommand{\ZZ}{\mathbf Z}
\newcommand{\CC}{\mathbf C}
\newcommand{\NN}{\mathbf N}
\newcommand{\GL}{{\rm GL}}
\newcommand{\type}{{\rm type}\,}
\newcommand{\rk}{n}
\begin{document}
\title[Walls in Milnor fiber complexes]{Walls in Milnor fiber complexes}
\author[A.~R.~Miller]{Alexander~R.~Miller}
\address{Fakult\"at f\"ur Mathematik, Universit\"at Wien, Wien, Austria}
\email{alexander.r.miller@univie.ac.at}
\begin{abstract}
For a real reflection group 
the reflecting hyperplanes 
cut out on the unit sphere a simplicial complex called 
the Coxeter complex. Abramenko showed that 
each reflecting hyperplane meets the Coxeter complex 
in another Coxeter complex if and only if the Coxeter 
diagram contains no subdiagram of type $D_4$, $F_4$, or $H_4$. 
The present paper extends Abramenko's result to a wider class of 
complex reflection groups. 
These groups have a Coxeter-like presentation and a Coxeter-like 
complex called the Milnor fiber complex. 
Our first main theorem classifies the groups 
whose reflecting hyperplanes meet the Milnor fiber complex 
in another Milnor fiber complex. 
To understand better the walls that fail to be Milnor fiber complexes 
we introduce {\it Milnor walls}. 
Our second main theorem generalizes Abramenko's result in 
a second way. It says 
that each wall of a Milnor fiber complex is a Milnor wall 
if and only if the diagram contains 
no subdiagram of type {$D_4$, $F_4$, or $H_4$.} 
\end{abstract}
\maketitle

\section{Introduction}
\noindent
For a real reflection group the reflecting hyperplanes 
cut out on the unit sphere a simplicial complex called 
the Coxeter complex. Abramenko \cite{A} showed that 
each reflecting hyperplane meets the Coxeter complex 
in another Coxeter complex if and only if the Coxeter 
diagram contains no subdiagram of type $D_4$, $F_4$, or $H_4$. 

The present paper extends Abramenko's result to a wider class of 
complex reflection groups. These groups have a Coxeter-like 
presentation and a Coxeter-like complex called the Milnor fiber complex. 
Our first main theorem (Theorem~\ref{Theorem:A}) classifies the groups 
whose reflecting hyperplanes meet the Milnor fiber complex 
in another Milnor fiber complex. 

To understand better the walls that fail to be Milnor fiber 
complexes we introduce {\it Milnor walls}.  These  
are walls with a type-selected set of chambers generating   
a Milnor fiber complex. Milnor walls 
in Coxeter complexes are walls that are Coxeter complexes. 
Our second main theorem (Theorem~\ref{Theorem:B}) thus 
generalizes Abramenko's result in a second way: it says 
each wall of a Milnor fiber complex is a Milnor wall 
if and only if the diagram contains no subdiagram of 
type $D_4$, $F_4$, or $H_4$. 

As a benefit of Theorem~\ref{Theorem:B} we find 
that Abramenko's result extends to give yet another equivalent 
condition in a curious classification \cite[Theorem 14]{M1} 
with already 11 equivalent conditions 
coming from invariant theory, cohomology, combinatorics, and some 
group characters related to adding random numbers. 
\subsection{}Fix a nonnegative integer $n\geq 0$ and a finite group $G$ of the form 
\begin{equation}
\langle\, r_1,r_2,\ldots,r_\rk \mid r_i^{p_i}=1,\ \  
r_ir_jr_i\ldots=r_jr_ir_j\ldots \ \ i\neq j\, \rangle\label{Presentation}
\end{equation}
where $p_i\geq 2$,   
the number of terms on both sides of the braid relation is 
${m_{ij}=m_{ji}\geq 2}$, and 
$p_i$ equals $p_j$ if $m_{ij}$ is odd\footnote{For $m_{ij}$ odd 
the braid relation says $(r_ir_j)^{(m_{ij}-1)/2}r_i=r_j(r_ir_j)^{(m_{ij}-1)/2}$
so that $r_i$ is conjugate to $r_j$.}. The empty set (when $n=0$) generates the trivial 
group.

\subsection{Preliminaries}
Call a group presentation of the form \eqref{Presentation} {\it admissible}. 
Koster~\cite{Ko} classified admissible presentations         
and found that the groups are precisely the finite direct products 
of finite irreducible Coxeter groups and Shephard groups. 
The classification implies no group has  two different admissible 
presentations. Write ${R = \{r_1 , r_2 , \ldots , r_\rk \}}$ and call 
$n$ the {\it rank} of~$G$.

\subsubsection{}The classification uses a    
graphical notation for admissible presentations. 
The {\it diagram} 
$\Gamma$ of \eqref{Presentation} 
has for each $r_i$ a vertex labeled $p_i$, and for 
each pair $r_i,r_j$ with $m_{ij}>2$ an edge labeled 
$m_{ij}$ that connects $r_i$ and $r_j$. We agree to 
suppress the minimal labels (2's~on vertices and 3's on edges). 
We say $\Gamma$ is {\it connected} if it has exactly one 
connected component; the diagram with no vertices is not connected. 
By {\it subdiagram of $\Gamma$} we mean a diagram 
gotten from $\Gamma$ by 
removing any number of vertices and all their incident edges.

\subsubsection{}Write $G=G(\Gamma)$ and let $\Gamma_1,\Gamma_2,\ldots,\Gamma_N$ 
be the connected components of $\Gamma$. Then 
\begin{equation}\label{irr:decomp}
G=G_1\times G_2\times\ldots\times G_N,\quad G_i=G(\Gamma_i)
\end{equation} 
where the empty product is the trivial group.
It follows that admissible diagrams are the unions of 
connected ones. Koster classified the connected ones. 
Table \ref{P:Table} lists them. The groups are the 
finite irreducible Coxeter groups and 
Shephard groups. Each comes from just one diagram.  
Finite irreducible Coxeter groups are the ones 
with all vertices~2.
Shephard groups are the ones with {\it linear diagram}
\begin{equation}\label{linear:diagram}
\begin{tikzpicture}
\draw [thick,-] (0,0)--(1.5,0);
\draw [thick,-] (1.5,0)--(3,0);
\draw [thick,-] (4+.25,0)--(5.5+.25,0);
\draw[fill] (0,0) circle [radius=0.04];
\draw[fill] (1.5,0) circle [radius=0.04];
\draw[fill] (3,0) circle [radius=0.04];
\draw[fill] (4+.25,0) circle [radius=0.04];
\draw[fill] (5.5+.25,0) circle [radius=0.04];
\node [above] at (.75,0) {\small$q_1$};
\node [above] at (2.25,0) {\small$q_2$};
\node [above] at (4.75+.15+.25,0) {\small$q_{n-1}$};
\node [above] at (0,0) {\small$p_1$};
\node [above] at (1.5,0) {\small$p_2$};
\node [above] at (3,0) {\small$p_3$};
\node [above] at (4+.15+.25,0) {\small$p_{n-1}$};
\node [above] at (5.5+.05+.25,0) {\small$p_{n}$};
\fill (3.5+.125,0) node {\ldots};
\end{tikzpicture}
\end{equation}
The {\it symbol} $p_1[q_1]p_2[q_2]p_3\ldots p_{n-1}[q_{n-1}]p_n$ 
(unique up to reversing term order) is shorthand for the linear 
diagram \eqref{linear:diagram}.

\subsubsection{}$G$ has a representation 
analogous to the canonical one for Coxeter groups. 
Fix a vector space $V$ over $\CC$ of dimension $n$. 
A {\it reflection} in $\GL(V)$ is 
an element of finite order whose fixed space $V^r=\ker(r-1)$ 
is a hyperplane, and a {\it finite reflection group} is a 
finite group generated by reflections. Finite Coxeter groups 
have a canonical representation as a real reflection group  
that we view as a reflection group by extending the base field.
In general the diagram $\Gamma$ encodes a canonical faithful 
representation of $G$ as a reflection group $G\subset \GL(V)$ 
in which each $r\in R$ is a reflection~\cite{Ko}. 
With this identification the reflections in $G$ are 
precisely the non-identity elements that are conjugate to a power  
of a generator $r\in R$. Call $G$ {\it irreducible} if 
the $\CC G$-module $V$ is irreducible. 
This happens if and only if $\Gamma$ is connected. 
Shephard groups are irreducible; the trivial group is not. 

\subsubsection{}Shephard and Todd classified the finite irreducible 
reflection groups and named {\it exceptional} ones 
$G_4,G_5,\ldots,G_{37}$. Not all of them are Coxeter or Shephard groups. 
The Coxeter ones have another set of names that we also use. For example 
$H_3$ and $G_{23}$ both refer to the same Shephard group in Table~\ref{P:Table}.

\subsubsection{}\label{Section:Degrees}
Finite reflection groups on $V$ 
are also the finite groups acting linearly on $V$ 
whose algebra of invariant polynomial functions $P$ on $V$ 
(with respect to $gP(v)=P(g^{-1}v)$)  
is generated by $n=\dim V$ homogeneous algebraically independent polynomials $P_i$. 
The {\it basic degrees} $d_i=\deg P_i$ are unique and numbered 
so that $d_1\leq d_2\leq\ldots\leq d_n$. 
If $G$ is irreducible, 
then by the classification (see Table~\ref{P:Table}):
\begin{enumerate}[(i)]
\item\label{Coxeter:Inequality} $d_1\geq 2$ with equality  if and only if $G$ is a Coxeter group.
\item\label{Milnor:Inequality} $d_1<d_2$ so that $P_1$ is unique up to a constant factor.
\end{enumerate}
If $G$ is irreducible, then $F_G=P_1^{-1}(1)$ is   
called {\it the Milnor fiber of~$G$}.

\subsubsection{}\label{Milnor:Intro}
According to details in \S\ref{MF:Details} 
there is a unique (up to $G$-isomorphism) 
abstract simplicial complex $\Delta=\Delta(G,R)$ with     
simplices (labeled by) the cosets $gG_I$ of standard parabolic 
subgroups $G_I=\langle I\rangle$ ($I\subset R$)  
with face relation 
``{$gG_I$ is a face of $hG_J$}'' $\Leftrightarrow$ $gG_I\supset hG_J$, and 
with $G$ acting on $\Delta$ by 
left translation $g.hG_I=ghG_I$. 
This is the classical abstract description of the 
Coxeter complex when $G$ is a Coxeter group \cite{Tits}.
Call $\Delta$ {\it the Milnor fiber complex of $G$}. 
It has an explicit geometric realization in $V$ that is $G$-homeomorphic 
to an equivariant strong deformation retract of the Milnor fiber $F_G$ if 
$G$ is irreducible \cite{O:Milnor}. In general it is described 
in \S\ref{MF:Details} as the join of the Milnor 
fiber complexes of the irreducible factors $G_i$ of~$G$.

Maximal simplices in $\Delta$ have dimension $n-1$ and any 
two can be connected by a sequence of them in which 
consecutive terms share a face of codimension $1$ so that $\Delta$ 
is a {\it chamber complex} and maximal simplices are called {\it chambers}. 
A general simplex $gG_{R\setminus I}$ has 
vertex set $\{gG_{R\setminus \{r\}}: r\in I\}$ and dimension $|I|-1$. 
The set $I\subset R$ is uniquely determined by $gG_{R\setminus I}$. 
Call $I$ the {\it type} of~$gG_{R\setminus I}$ and write 
$\type(gG_{R\setminus I})=I$. 

\subsubsection{}
The simplices of $\Delta$ that are fixed pointwise by a 
reflection of $G$ form a subcomplex we call a {\it wall}. 
Since an element $g\in G$ fixing a simplex $hG_{R\setminus I}\in \Delta$ 
effects a type-preserving permutation of the vertices 
$hG_{R\setminus\{r\}}$ ($r\in I$) the simplex is in fact 
fixed pointwise by $g$ and so the walls of $\Delta$ are 
\begin{equation}
\Delta^r=\{\sigma\in \Delta : r\sigma=\sigma\},\quad\text{$r\in G$ a reflection}.
\end{equation}

\subsection{} Our first theorem extends Abramenko's result to Milnor fiber complexes.

\begin{thm}\label{Theorem:A}
Each wall of the Milnor fiber complex $\Delta$ is again a Milnor fiber complex 
if and only if the diagram of~$G$ contains no subdiagram of type $D_4$, $F_4$, $H_4$, 
$G_{25}$, or~$G_{26}$. 
\end{thm}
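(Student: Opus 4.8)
The plan is to reduce the statement to a finite check and then to Abramenko's theorem. First I would observe that the "if and only if" splits naturally. For the forward direction (necessity of the diagram condition) it suffices, since a subdiagram of $\Gamma$ gives a standard parabolic subgroup $G_I$ whose Milnor fiber complex is a full subcomplex (indeed a link of a simplex) of $\Delta$, to exhibit for each of the five "bad" diagrams $D_4$, $F_4$, $H_4$, $G_{25}$, $G_{26}$ a single reflection $r$ whose wall $\Delta^r$ is \emph{not} a Milnor fiber complex. For the Coxeter cases $D_4,F_4,H_4$ this is exactly the content of Abramenko's result \cite{A}; for $G_{25}$ and $G_{26}$ it is a finite computation with the explicit reflection representation from Table~\ref{P:Table}. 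So the first key step is a lemma: if $G_I$ is a standard parabolic subgroup of $G$ and $r\in G_I$ is a reflection, then $\Delta(G_I,I)^r$ appears as a wall of $\Delta(G,R)$ (via a link in the join decomposition of \S\ref{MF:Details}), and it is a Milnor fiber complex as a wall of $\Delta(G,R)$ iff it is one as a wall of $\Delta(G_I,I)$. This "heredity" reduces necessity to the rank-$\le 4$ irreducible diagrams on the list.

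Second, for the converse (sufficiency) I would argue that if $\Gamma$ contains none of the five bad subdiagrams, then every wall of $\Delta$ is a Milnor fiber complex. Using the join decomposition $\Delta(G,R)=\Delta(G_1,R_1)\ast\cdots\ast\Delta(G_N,R_N)$ over connected components, a wall $\Delta^r$ with $r$ a reflection lying in the factor $G_i$ is the join $\Delta^r_{(i)}\ast\bigast_{j\ne i}\Delta(G_j,R_j)$, so it is a Milnor fiber complex iff $\Delta(G_i,R_i)^r$ is; hence one reduces to $G$ \emph{irreducible}. Now the diagrams that are irreducible and contain none of $D_4,F_4,H_4,G_{25},G_{26}$ are, by inspection of Table~\ref{P:Table} and Koster's classification: the types $A_n$, $B_n$, $I_2(m)$, $H_3$, and the Shephard groups with rank $\le 2$ together with those rank-$3$ and rank-$4$ Shephard groups whose diagram avoids $G_{25},G_{26}$ — in particular the two- and three-node linear diagrams $p[q]r$ and the tetrahedral-free ones. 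For each such family I would identify the wall $\Delta^r$ concretely. For $A_n$ and $B_n$ this is Abramenko's analysis; for the genuinely complex Shephard groups the point is that $\Delta^r$, being the fixed complex of a reflection, is itself governed by a smaller admissible presentation — the stabilizer data — and one verifies it matches $\Delta(G',R')$ for the appropriate $G'$. The main structural tool here is that the reflection $r$ acts on $\Delta$ type-preservingly, so $\Delta^r$ inherits a type function, and a wall is a Milnor fiber complex precisely when this type-selected chamber system satisfies the axioms recalled in \S\ref{MF:Details}; concretely, when the "residues" (links of codimension-$2$ faces) are all of the expected rank-$1$ form, i.e. when no codimension-$2$ residue of type coming from a $D_4/F_4/H_4/G_{25}/G_{26}$-configuration survives inside $\Delta^r$.

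The third step, and the one I expect to be the real obstacle, is the explicit verification for $G_{25}$ and $G_{26}$ that some wall fails, and — dually — the verification for the rank-$3$ and rank-$4$ Shephard groups \emph{not} containing $G_{25}$ or $G_{26}$ that \emph{every} wall succeeds. Abramenko's criterion for Coxeter groups is ultimately a rank-$2$-and-rank-$3$ statement: a wall fails exactly when some rank-$3$ residue is of type $D_4$'s "interior" configuration (or $H_3$ inside $H_4$, $A_2$ inside $F_4$, etc.), and one needs the complex analogue. So I would reformulate: a wall $\Delta^r$ is a Milnor fiber complex iff for every simplex $\sigma\in\Delta^r$ of codimension $2$ in $\Delta$, the link $\mathrm{lk}_{\Delta^r}(\sigma)$ is a Milnor fiber complex of rank $\le 2$ — equivalently a single point, or an $m$-gon, or the appropriate rank-$2$ Shephard complex. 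This localizes the whole theorem to rank-$3$ parabolic subgroups, at which point it becomes a finite enumeration: go through every connected rank-$3$ admissible diagram, and for each reflection $r$ in it decide whether $\Delta^r$ is a Milnor fiber complex; the outcome is "no" exactly for $H_3\subset H_4$, $A_3\subset D_4$, $B_3\subset F_4$, and the two sporadic rank-$3$ residues inside $G_{25},G_{26}$. Combining the localization with this finite table, and then propagating back up via links and joins, yields both directions of Theorem~\ref{Theorem:A}. The delicate bookkeeping will be making the "localization to rank $3$" lemma precise in the Milnor-fiber setting, since unlike the Coxeter case the chambers need not be simplices cut out by hyperplanes in the naive way — one must use the abstract characterization of $\Delta$ from \S\ref{MF:Details} throughout.
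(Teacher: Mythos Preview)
Your reduction to the irreducible case and your heredity lemma for the necessity direction are correct and agree with the paper (Propositions~\ref{Proposition:Reduction:A} and~\ref{Lemma:Products}); in fact your heredity argument would let one deduce the failure for $G_{32}$ from that for $G_{25}$, whereas the paper verifies $G_{32}$ directly.

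The genuine gap is your ``localization to rank~$3$'' criterion. You assert that $\Delta^r$ is a Milnor fiber complex iff every link $\mathrm{lk}_{\Delta^r}(\sigma)$ at a codimension-$2$ simplex $\sigma$ is a rank~$\le 2$ Milnor fiber complex, and that this ``localizes the whole theorem to rank-$3$ parabolic subgroups.'' The forward implication is fine (Proposition~\ref{Lemma:Products}), but the converse is false already in the Coxeter case. In $D_4$, $F_4$, $H_4$ every proper standard parabolic is of type $A$, $B$, $H_3$, or a product of $A_1$'s; by Abramenko each wall of each such parabolic \emph{is} a Coxeter complex, so every rank-$3$ link of every wall $\Delta^r$ is a rank-$2$ Milnor fiber complex---yet Abramenko's theorem says some wall $\Delta^r$ is \emph{not} a Coxeter complex, hence by Proposition~\ref{Lemma:Coxeter:Walls} not a Milnor fiber complex. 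So good local residues do not force the global wall to be a Milnor fiber complex; there is no recognition theorem of the sort you invoke, and the obstruction in rank~$4$ is genuinely global.

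The paper takes a different route. For sufficiency on the one infinite non-Coxeter family it gives an explicit poset model (Proposition~\ref{Proposition:Monomial:Comb}) showing every wall of $\Delta(G(m,1,n))$ is $\Delta(G(m,1,n-1))$; your sketch does not address $G(m,1,n)$ for $n\ge 3$. For necessity on $G_{25}$, $G_{26}$, $G_{32}$ the paper does not compute the walls directly but uses two global invariants: Orlik's theorem that $\Delta^r$ has the homotopy type of a $(d_1-1)^{n-1}$-fold bouquet of $(n-2)$-spheres, and the chamber count $f_{n-2}(\Delta^r)=d_1d_2\cdots d_{n-1}$ (Theorem~\ref{Theorem:Chamber:Count}). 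These two numbers pin down the possible basic degrees of any candidate $(W,S)$, and one then checks against the classification in Table~\ref{P:Table} that no admissible group matches (for $G_{26}$ an additional vertex-degree argument eliminates the two surviving candidates $G_5$ and $G(6,1,2)$). This invariant-based elimination is the idea your proposal is missing.
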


\noindent
We prove Theorem~\ref{Theorem:A} in Section~\ref{S:Proof:A} by 
first reducing to the case where $G$ is irreducible and then 
using the classification together with some 
enumerative and topological results about $\Delta$ 
that relate to the invariant theory of $G$.

\subsubsection{}\label{Section:Invariants} 
We recover Abramenko's result from Theorem~\ref{Theorem:A} 
by Proposition~\ref{Lemma:Coxeter:Walls}, which tells us   
that for Coxeter complexes all walls that are Milnor fiber 
complexes must be Coxeter complexes.

\begin{cor*}[Abramenko]
Each wall of a Coxeter complex is again a Coxeter complex if and only if the 
diagram contains no subdiagram of type $D_4$, $F_4$, or $H_4$.\qed
\end{cor*}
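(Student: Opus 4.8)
The plan is to obtain the corollary as a quick specialization of Theorem~\ref{Theorem:A} combined with Proposition~\ref{Lemma:Coxeter:Walls}. Fix a finite Coxeter group $G$. Every vertex of the diagram $\Gamma$ of $G$ carries the suppressed label $2$, and hence so does every vertex of every subdiagram of $\Gamma$, since passing to a subdiagram only deletes vertices and their incident edges without altering a surviving label. The diagrams of the Shephard groups $G_{25}$ and $G_{26}$, by contrast, each contain a vertex with label at least $3$ (they are not Coxeter groups). So $\Gamma$ admits no subdiagram of type $G_{25}$ or $G_{26}$, and for this $G$ the hypothesis of Theorem~\ref{Theorem:A} --- that $\Gamma$ contains no subdiagram of type $D_4$, $F_4$, $H_4$, $G_{25}$, or $G_{26}$ --- is equivalent to the assertion that $\Gamma$ contains no subdiagram of type $D_4$, $F_4$, or $H_4$.

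Next I would record the two elementary facts that link the two kinds of complex in the Coxeter case. First, when $G$ is a Coxeter group the Milnor fiber complex $\Delta=\Delta(G,R)$ is exactly the Coxeter complex of $G$ (see \S\ref{Milnor:Intro}), so the walls $\Delta^r$ of $\Delta$ coincide with the walls of the Coxeter complex. Second, every finite Coxeter group has an admissible presentation (all $p_i=2$), so its Coxeter complex is a Milnor fiber complex; hence a wall $\Delta^r$ that happens to be a Coxeter complex is in particular a Milnor fiber complex. The reverse implication --- that a wall of a Coxeter complex which is a Milnor fiber complex is necessarily a Coxeter complex --- is exactly Proposition~\ref{Lemma:Coxeter:Walls}.

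With these observations both directions follow at once. If $\Gamma$ has no subdiagram of type $D_4$, $F_4$, or $H_4$, then by the first paragraph it has none of the five exceptional subdiagrams, so Theorem~\ref{Theorem:A} gives that every wall of $\Delta$ is a Milnor fiber complex, and Proposition~\ref{Lemma:Coxeter:Walls} upgrades this to: every wall of $\Delta$ is a Coxeter complex. Conversely, if every wall of the Coxeter complex $\Delta$ is again a Coxeter complex, then every wall is a Milnor fiber complex, so Theorem~\ref{Theorem:A} forbids in $\Gamma$ every subdiagram of type $D_4$, $F_4$, $H_4$, $G_{25}$, or $G_{26}$, and a fortiori $\Gamma$ has no subdiagram of type $D_4$, $F_4$, or $H_4$. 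I expect no genuine obstacle here: the entire mathematical weight is carried by Theorem~\ref{Theorem:A} and Proposition~\ref{Lemma:Coxeter:Walls}, and what is left is the bookkeeping reduction of the forbidden list together with a check that the argument passes through joins --- which it does, since both \emph{Coxeter complex} and \emph{Milnor fiber complex} are closed under joins and the two cited results hold for arbitrary admissible $G$.
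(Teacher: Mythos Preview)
Your proposal is correct and matches the paper's own derivation: the sentence preceding the corollary states explicitly that Abramenko's result is recovered from Theorem~\ref{Theorem:A} via Proposition~\ref{Lemma:Coxeter:Walls}, and your argument simply spells out this deduction, including the easy observation that a Coxeter diagram cannot contain a $G_{25}$ or $G_{26}$ subdiagram. Note only that this recovery is not an independent proof of Abramenko's theorem, since the paper's proof of Theorem~\ref{Theorem:A} itself invokes Abramenko's result in the Coxeter case; the corollary is recorded to indicate that Theorem~\ref{Theorem:A} specializes to the known statement.
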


\subsection{} Our second theorem generalizes Abramenko's result in 
another way. The observation is that 
walls in Milnor fiber complexes 
can still hold Milnor fiber complexes of the same 
dimension in the sense that 
certain types of chambers in the wall generate a Milnor fiber complex. 
We make this precise with the definition 
of {\it Milnor wall} in Section~\ref{S:Milnor:Walls}, and then 
we prove the following theorem which also implies Abramenko's result.
  
\begin{thm}\label{Theorem:B}
Each wall of the Milnor fiber complex $\Delta$ is a Milnor wall 
if and only if the diagram of~$G$ contains no subdiagram of type 
$D_4$, $F_4$, or $H_4$.
\end{thm}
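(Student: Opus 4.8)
The plan is to reduce to the irreducible case and then leverage Theorem~\ref{Theorem:A} together with a careful analysis of what happens for the five ``bad'' diagrams $D_4$, $F_4$, $H_4$, $G_{25}$, $G_{26}$. First I would establish that both the hypothesis and the conclusion are compatible with the join decomposition: since $\Delta(G,R)$ is the join of the Milnor fiber complexes of the irreducible factors $G_i$, a reflection $r$ lies in a single factor $G_i$, and the wall $\Delta^r$ is the join of $\Delta(G_i,R_i)^r$ with the full Milnor fiber complexes of the other factors. A join of Milnor fiber complexes is a Milnor fiber complex (of the product group), so being a Milnor wall is a join-local condition. Likewise ``contains no subdiagram of type $D_4$, $F_4$, $H_4$'' holds for $\Gamma$ iff it holds for every connected component. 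Hence it suffices to prove the theorem when $G$ is irreducible, and moreover to understand subdiagrams it suffices to treat the case where the forbidden diagram $D_4$, $F_4$, or $H_4$ \emph{is} a connected component, i.e.\ $G$ itself is one of these rank-$4$ groups, versus the case where $\Gamma$ avoids all three.

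For the ``if'' direction (no bad subdiagram $\Rightarrow$ every wall is a Milnor wall), I would split on whether $G$'s diagram also avoids $G_{25}$ and $G_{26}$. If it does, Theorem~\ref{Theorem:A} already gives that every wall is a genuine Milnor fiber complex; I then need to check that a wall which is itself a Milnor fiber complex is automatically a Milnor wall, i.e.\ that it is generated by a type-selected family of chambers of $\Delta$ in the prescribed sense --- this should follow from the explicit coset description in \S\ref{Milnor:Intro}, since the chambers of $\Delta$ containing a fixed panel of $\Delta^r$ restrict correctly. The remaining groups with a $G_{25}$ or $G_{26}$ subdiagram but no $D_4$, $F_4$, $H_4$ subdiagram are a short finite list (the Shephard groups built on $3[3]3$, $3[3]3[3]3$, $3[4]3$, and their linear extensions, intersected against the $D_4/F_4/H_4$-free condition), and for each I would exhibit directly the type-selected chamber set inside each wall whose generated subcomplex is a Milnor fiber complex of the appropriate rank $n-1$, using the known invariant-theoretic data (basic degrees, in particular $d_1$) from Table~\ref{P:Table} to identify which Milnor fiber complex appears.

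For the ``only if'' direction I must show that if $\Gamma$ contains a subdiagram of type $D_4$, $F_4$, or $H_4$, then some wall of $\Delta$ fails to be a Milnor wall. By the join reduction it is enough to produce such a wall when $G$ is one of these three rank-$4$ Coxeter groups, and then promote the failure to the general case: a subdiagram corresponds to a standard parabolic $G_I$, its Milnor fiber complex sits inside $\Delta$ as a link, and a reflection $r\in G_I$ acts on that link as it does on $\Delta(G_I,R_I)$, so a non-Milnor-wall in the parabolic forces a non-Milnor-wall in $\Delta$ (one checks the type-selection and the rank count are inherited through the link). For each of $D_4$, $F_4$, $H_4$ I would take a convenient reflection $r$ and analyze $\Delta^r$ combinatorially via the coset model: the key invariants to compute are the possible types of chambers of $\Delta$ that meet $\Delta^r$ in a panel and the resulting ``restriction'' structure on $\Delta^r$; I expect to find that no type-selection of chambers yields a subcomplex with the right face-count/topology to be a Milnor fiber complex, exactly as in Abramenko's original obstruction for Coxeter complexes.

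The main obstacle will be the ``only if'' direction for the three exceptional Coxeter cases --- pinning down precisely why \emph{no} choice of type-selected chamber family in $\Delta^r$ regenerates a Milnor fiber complex. Abramenko's argument rules out the wall being a Coxeter complex; upgrading this to rule out every Milnor wall structure requires the extra input that Milnor fiber complexes of a fixed rank are constrained by the invariant-theoretic inequalities \eqref{Coxeter:Inequality}--\eqref{Milnor:Inequality} (e.g.\ $d_1<d_2$ forces $P_1$ unique, which rigidifies the complex), so I would combine the combinatorial wall analysis with these degree constraints and Proposition~\ref{Lemma:Coxeter:Walls} to close the gap. The rest --- the join reduction, the parabolic/link promotion, and the finite check of the $G_{25}$/$G_{26}$ groups --- I expect to be routine given Theorem~\ref{Theorem:A} and Table~\ref{P:Table}.
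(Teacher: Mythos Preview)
Your overall architecture --- reduce to the irreducible case, invoke Theorem~\ref{Theorem:A} for the generic ``if'' direction, then handle the leftover exceptions --- matches the paper. But there are two genuine gaps.

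\textbf{The ``only if'' direction.} Your link-promotion step (``a non-Milnor-wall in the parabolic forces a non-Milnor-wall in $\Delta$'') is not justified and it is not clear it can be made to work. Concretely, suppose $(\Delta^r)_{\mathcal F}$ is an $(n-2)$-dimensional Milnor fiber complex for some $\mathcal F\subset\binom{R}{n-1}$; to deduce a Milnor-wall structure on $\Delta(G_I,I)^r$ you would need a simplex of type $R\setminus I$ inside $(\Delta^r)_{\mathcal F}$ whose link there recovers $(\Delta(G_I,I)^r)_{\mathcal F_I}$ for a suitable $\mathcal F_I$, and nothing forces the chambers selected by $\mathcal F$ to contain such a simplex or to restrict compatibly. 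The paper avoids this entirely by a much simpler observation: once $G$ is irreducible, a glance at Table~\ref{P:Table} shows that any diagram containing a $D_4$, $F_4$, or $H_4$ subdiagram has all vertex labels equal to $2$, so $G$ is already a Coxeter group. Then one uses the topological fact that a Coxeter complex has \emph{no proper Milnor walls} (Lemma~\ref{Coxeter:Milnor:Walls}): a wall of an $(n-1)$-dimensional Coxeter complex is an $(n-2)$-sphere, and deleting any chambers from a sphere cannot produce a bouquet of spheres of the same dimension. Hence Milnor walls coincide with walls that are Coxeter complexes, and Abramenko supplies a wall that is not. Your proposed route through degree inequalities $d_1<d_2$ and Proposition~\ref{Lemma:Coxeter:Walls} is both harder and unnecessary.

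\textbf{The ``if'' direction for $G_{25}$, $G_{26}$, $G_{32}$.} Saying you will ``exhibit directly the type-selected chamber set\ldots using the known invariant-theoretic data'' is not a proof sketch; the basic degrees alone do not tell you which subcomplex of $\Delta^r$ to take or why it is a Milnor fiber complex. The paper's input here is a specific classical fact: Coxeter showed that for the regular complex polytope $\mathscr P$ associated to each of these three groups, the \emph{section of $\mathscr P$ by any reflecting hyperplane} is again a regular complex polytope (for $G(3,1,2)$, $G(3,1,2)$ or $G(6,1,2)$, and $G_{26}$, respectively). Since the Milnor fiber complex is the flag complex of $\mathscr P$, this translates directly into the statement that $(\Delta^r)_{\mathcal F}$ with $\mathcal F=\{R\setminus\{r_{n-1}\}\}$ is a Milnor fiber complex of the right dimension. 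You should cite or reproduce this polytope-section argument; invariant degrees by themselves will not produce the type-selected subcomplex.
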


\begin{remark} 
In \cite[Theorem 14]{M1} we proved that 
if $G$ is irreducible, then 
the diagram of $G$ contains no subdiagram 
of type $D_4$, $F_4$, or $H_4$ if and only if 
the {\it Foulkes characters} $\phi_0,\phi_1,\ldots,\phi_n$ for $G$ depend only on
fixed-space dimension in the sense that $\phi_i(g)=\phi_i(h)$ 
whenever $\dim V^g=\dim V^h$. 
See \cite[Theorem 14]{M1} for 9 other equivalent conditions.
\end{remark}

\section{Milnor fiber complexes}\label{S:Proof:A}
\noindent
In this section we prove Theorem~\ref{Theorem:A} after some preliminaries. 
\S\ref{MF:Details} defines the Milnor fiber complex.  
\S\ref{Section:Walls} defines walls.
\S\ref{SS:Top} develops some topological results. \S\ref{SS:Enum} develops 
some enumerative results. 
\S\ref{S:MF:Comb} gives a combinatorial description of the Milnor fiber complex 
for the full monomial groups. 
Then in \S\ref{Section:Proof:A} we prove Theorem~\ref{Theorem:A}.

Recall that 
the connected components $\Gamma_i$ of $\Gamma$ partition $R$ into 
disjoint sets $R_i$ so that 
\begin{equation}\label{G:Decomposition}
G=G_1\times G_2\times \ldots\times G_N,\quad G_i=\langle R_i\rangle.
\end{equation}
Let $n_i$ and $\delta_i$ be the rank and smallest basic degree of  $G_i$,  
so that ${n_i=|R_i|\geq 1}$,  ${n=n_1+n_2+\ldots+n_N}$, 
and $\delta_i\geq 2$ with equality if and only if $G_i$ is a Coxeter group.

\subsection{The Milnor fiber complex}\label{MF:Details} 
We define the {\it Milnor fiber complex of $G$} to be the 
complex $\Delta$ described by the following theorem.
The definition is in terms of cosets 
$gG_I$ of standard parabolic 
subgroups $G_I=\langle I\rangle$ ($I\subset R$). 
If $G$ is a Coxeter group, then the definition is 
the standard abstract one for the Coxeter complex of $G$ and 
the properties that we list are well known, see \cite{Tits}. 
The geometric construction of the Coxeter complex was generalized 
to include Shephard groups by Orlik \cite{O:Milnor}. 
He called this more general complex the Milnor fiber complex. 
The abstract definition of the Coxeter complex 
was later shown to hold for the Milnor fiber complex in the 
Shephard case~\cite{ORS}. 
For details about the extension of the definition and the properties to the 
Shephard case see~\cite{M0}. The general 
case of the following theorem follows from the Coxeter and Shephard cases.

\begin{thm}\label{Theorem:Definition} 
The standard parabolic cosets $gG_I$ ($g\in G, I\subset R$) 
with face relation 
\[\text{``$gG_I$ is a face of $hG_J$'' $\Leftrightarrow$ $gG_I\supset hG_J$}\] 
is a simplicial complex $\Delta$. Moreover $\Delta$ is a chamber 
complex with the following structure:
\begin{enumerate}[\rm(i)]
\item $G$ acts on $\Delta$ by left translation $g.hG_I=ghG_I$.
\item $\Delta$ has a $G$-invariant {\it type function} 
$\Delta\to\{\text{subsets of $R$}\}$ given by $\type(hG_{R\setminus I})=I$.
\item\label{Join:Isom} There exists a type-preserving $G$-equivariant isomorphism 
\begin{equation}\label{MFC:Def}
\Delta\cong\Delta_1*\Delta_2*\ldots*\Delta_N,\quad \Delta_k=\Delta(G_k,R_k)
\end{equation}
where 
$g.(h_1G_{I_1}*\ldots *h_NG_{I_N})=g_1h_1G_{I_1}*\ldots *g_Nh_NG_{I_N}$ for $g=g_1g_2\ldots g_N$,  $g_i\in G_i$, 
$I_i\subset R_i$, and where 
$\type(h_1G_{R_1\setminus I_1}*\ldots *h_NG_{R_N\setminus I_N})=I_1\cup\ldots\cup I_N$.
\end{enumerate}
\end{thm}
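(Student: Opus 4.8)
\noindent
The plan is to reduce to the case where $G$ is irreducible, where the statement is essentially in the literature, and then bootstrap to the general case through the join isomorphism of part~(iii).

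So first take $G$ irreducible. By Koster's classification (Table~\ref{P:Table}) it is then a finite irreducible Coxeter group or a Shephard group. In the Coxeter case the poset of standard parabolic cosets $gG_I$ ordered by reverse inclusion is the classical abstract Coxeter complex, a simplicial chamber complex carrying the left translation action and the $G$-invariant type function; see \cite{Tits}. In the Shephard case Orlik \cite{O:Milnor} produced a chamber complex geometrically, \cite{ORS} identified it abstractly with the standard parabolic coset poset, and \cite{M0} records the needed facts about standard parabolics. What I would isolate as the two structural inputs are: (1)~$G_I\cap G_J=G_{I\cap J}$ for all $I,J\subset R$; and (2)~$G_I\cap R=I$, so that a standard parabolic subgroup determines its generating set. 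From these the simplicial complex axioms follow at once: the faces of a fixed simplex $hG_{R\setminus I}$ are exactly the cosets $hG_{I'}$ with $I'\supset R\setminus I$, a Boolean lattice, and by~(1) a simplex is determined by its vertices since $\bigcap_{r\in I}hG_{R\setminus\{r\}}=hG_{R\setminus I}$. Likewise $\Delta$ is a chamber complex: the chambers are the singletons $gG_\emptyset=\{g\}$ of dimension $n-1$, and expressing $g^{-1}h=s_1s_2\cdots s_\ell$ with $s_i\in R$ yields a gallery $\{g\},\{gs_1\},\dots,\{h\}$ in which consecutive chambers share the panel $gs_1\cdots s_{i-1}G_{\{s_i\}}$. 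Property~(i) is the tautology that left translation is an order automorphism of the coset poset, and property~(ii) holds because $\type(g\cdot hG_{R\setminus I})=\type(ghG_{R\setminus I})=I$ by~(2).

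For general $G$ I would decompose $R=R_1\sqcup\cdots\sqcup R_N$ along the connected components, so that $G=G_1\times\cdots\times G_N$. Every $I\subset R$ splits uniquely as $I=I_1\sqcup\cdots\sqcup I_N$ with $I_k=I\cap R_k$, and $G_I=\langle I_1\rangle\times\cdots\times\langle I_N\rangle=(G_1)_{I_1}\times\cdots\times(G_N)_{I_N}$; hence for $g=g_1\cdots g_N$ with $g_k\in G_k$ we get $gG_I=g_1(G_1)_{I_1}\times\cdots\times g_N(G_N)_{I_N}$. The assignment $gG_I\mapsto g_1(G_1)_{I_1}*\cdots*g_N(G_N)_{I_N}$ is then a bijection from the standard parabolic cosets of $G$ onto the cells of the join $\Delta_1*\cdots*\Delta_N$ (with $\Delta_k=\Delta(G_k,R_k)$, where $I_k=R_k$ contributes the empty cell of $\Delta_k$). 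I would check that reverse inclusion of cosets matches the face order on the join factor by factor, so this bijection is a poset isomorphism, hence an isomorphism of simplicial complexes; since a join of simplicial chamber complexes is again a simplicial chamber complex, $\Delta$ is one too. Transporting the type function and the $G$-action across this isomorphism gives exactly the formulas $\type(h_1G_{R_1\setminus I_1}*\cdots*h_NG_{R_N\setminus I_N})=I_1\cup\cdots\cup I_N$ and $g.(h_1G_{I_1}*\cdots*h_NG_{I_N})=g_1h_1G_{I_1}*\cdots*g_Nh_NG_{I_N}$ of part~(iii), and properties~(i),(ii) for general $G$ then follow by transport, or directly since (1) and~(2) pass to direct products; in particular $G_I\cap R=\bigsqcup_k\big((G_k)_{I_k}\cap R_k\big)=\bigsqcup_k I_k=I$.

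The routine content is the poset and join bookkeeping in the last two paragraphs. The only substantive ingredient is the structure theory of standard parabolics in the Shephard case, i.e.\ facts~(1) and~(2), which is precisely what \cite{O:Milnor,ORS,M0} supply; so the main obstacle here is organizing the reduction and invoking that theory correctly rather than proving anything genuinely new. The one subtlety worth flagging is the well-definedness of the type function when $G$ is reducible: it rests on~(2), and one should record that~(2) survives passage to a direct product exactly because $R$ is the disjoint union of the $R_k$ and $G_I$ is the corresponding product of parabolics.
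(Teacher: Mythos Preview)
Your proposal is correct and follows essentially the same route as the paper: cite the literature for the irreducible (Coxeter/Shephard) case, then establish the general case by exhibiting the join isomorphism of part~(iii) and reading off (i)--(ii) from it. The only cosmetic difference is that the paper writes the bijection in the opposite direction, $h_1G_{I_1}*\cdots*h_NG_{I_N}\mapsto h_1\cdots h_N\,G_{I_1\cup\cdots\cup I_N}$, and is considerably terser; your explicit isolation of the parabolic facts $G_I\cap G_J=G_{I\cap J}$ and $G_I\cap R=I$ is additional commentary rather than a different argument.
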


\begin{proof}
For the case where $G$ is a Coxeter or Shephard group see \cite{Tits} and \cite{M0}. 
For the general case it suffices to prove \eqref{Join:Isom}. 
To this end note that the mapping 
\[h_1G_{I_1}*h_2G_{I_2}*\ldots *h_NG_{I_N}\mapsto h_1h_2\ldots h_N G_{I_1\cup I_2\cup \ldots \cup I_N}\]
takes $\Delta_1*\Delta_2*\ldots*\Delta_N$ bijectively onto $\Delta$ in a type-preserving fashion, 
and is compatible with the face relation and $G$-action. In particular $\Delta$ is a simplicial complex. 
\end{proof}

We require the following lemma 
\cite[Lemma 3.14 with $T=R\setminus U$]{M0} which tells us 
that each link in a Milnor fiber complex is again a Milnor fiber complex.
\begin{prop}\label{Lemma:Products}
The link of a simplex $gG_I$ in $\Delta$ is isomorphic to ${\Delta(G_I,I)}$. \qed
\end{prop}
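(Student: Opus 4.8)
The plan is to exhibit an explicit isomorphism between the coset posets of $\Delta(G_I,I)$ and of the link. First, by the $G$-equivariance in Theorem~\ref{Theorem:Definition}(i) I would translate by $g^{-1}$ and reduce to $g=1$, so that it suffices to prove $\mathrm{lk}_\Delta(G_I)\cong\Delta(G_I,I)$. Recall from Theorem~\ref{Theorem:Definition} that $hG_J$ is a face of $kG_K$ in $\Delta$ exactly when $K\subseteq J$ and $hG_J=kG_J$; equivalently, the faces of the simplex $hG_J$ are the cosets $hG_L$ with $J\subseteq L\subseteq R$. Consequently the simplices of $\Delta$ having $G_I$ as a face are precisely the cosets contained in $G_I$, namely the $xG_K$ with $x\in G_I$ and $K\subseteq I$. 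Writing such a simplex as a join $G_I\ast\rho$, the complementary face $\rho$ — the one spanned by the vertices not lying in $G_I$ — is $xG_{(R\setminus I)\cup K}$, i.e.\ the face of type $I\setminus K$ of the chamber $\{x\}$ (which has $G_I$ as a face since $x\in G_I$). Letting $G_I\ast\rho$ run over all such simplices yields
\[
\mathrm{lk}_\Delta(G_I)=\{\, xG_{(R\setminus I)\cup K} : x\in G_I,\ K\subseteq I \,\}.
\]

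This identifies the candidate isomorphism
\[
\Phi\colon\Delta(G_I,I)\longrightarrow\mathrm{lk}_\Delta(G_I),\qquad \Phi(xG_K)=xG_{(R\setminus I)\cup K}\qquad(x\in G_I,\ K\subseteq I),
\]
where on the left $xG_K$ is a coset of a standard parabolic of $G_I$. I would then verify that $\Phi$ is well defined ($xG_K=x'G_K$ forces equality of the images), $G_I$-equivariant, bijective, and order preserving in both directions; by the description above this is the whole assertion. The order check is the heart of the matter: $x'G_{K'}$ is a face of $xG_K$ in $\Delta(G_I,I)$ iff $K\subseteq K'$ and $(x')^{-1}x\in G_{K'}$, whereas $\Phi(x'G_{K'})$ is a face of $\Phi(xG_K)$ in $\Delta$ iff $K\subseteq K'$ and $(x')^{-1}x\in G_{(R\setminus I)\cup K'}$, and for $x,x'\in G_I$ the identity $G_I\cap G_{(R\setminus I)\cup K'}=G_{K'}$ makes the two conditions coincide. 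The same intersection identity, together with the fact that distinct subsets of $R$ give distinct standard parabolics, gives injectivity of $\Phi$, and surjectivity is precisely the displayed formula for $\mathrm{lk}_\Delta(G_I)$.

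The step I expect to be the real obstacle is not the bookkeeping with $\Phi$ but securing the underlying parabolic combinatorics outside the Coxeter case: that $G_A\cap G_B=G_{A\cap B}$, that $L\mapsto G_L$ is injective, and that $\Delta$ is a chamber complex in which every simplex lies in a chamber. For Coxeter groups these are classical \cite{Tits}; for Shephard groups they are part of the structure theory developed in \cite{O:Milnor,ORS,M0}; and the general (reducible) case then reduces to the connected one via the join decomposition in Theorem~\ref{Theorem:Definition}(iii) and the standard identity $\mathrm{lk}_{\Delta_1\ast\cdots\ast\Delta_N}(\sigma_1\ast\cdots\ast\sigma_N)=\mathrm{lk}_{\Delta_1}(\sigma_1)\ast\cdots\ast\mathrm{lk}_{\Delta_N}(\sigma_N)$, which matches the factorizations $G_I=\prod_kG_{I\cap R_k}$ and $\Delta(G_I,I)=\Delta(G_{I\cap R_1},I\cap R_1)\ast\cdots\ast\Delta(G_{I\cap R_N},I\cap R_N)$. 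So the genuine content is inherited from the Coxeter and Shephard cases — this is in effect the content of the cited \cite[Lemma 3.14]{M0} — and once it is in hand the explicit map $\Phi$ completes the proof.
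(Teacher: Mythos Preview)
Your proposal is correct. The paper does not actually prove this proposition: it is stated with a \qed\ and attributed wholesale to \cite[Lemma~3.14]{M0}. What you have written is essentially the content behind that citation --- the explicit coset map $xG_K\mapsto xG_{(R\setminus I)\cup K}$, together with the observation that everything reduces to the parabolic intersection identity $G_I\cap G_{(R\setminus I)\cup K'}=G_{K'}$ and the injectivity of $L\mapsto G_L$, both of which are imported from the Coxeter and Shephard structure theory. Your treatment is therefore not a different route so much as an unpacking of the reference the paper invokes; the only thing to add is that your computation of the link and your verification that $\Phi$ respects the face relation are accurate as stated.
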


\subsection{The walls of the Milnor fiber complex}\label{Section:Walls}
The simplices of $\Delta$ that are fixed pointwise by a 
reflection of $G$ form a subcomplex we call a {\it wall}. 
The following proposition says that a simplex $\sigma\in \Delta$ 
is fixed pointwise by $g\in G$ if and only if $g\sigma=\sigma$. Write 
\begin{equation}
\Delta^g=\{\sigma\in \Delta : g\sigma=\sigma\},\quad g\in G.
\end{equation}

\begin{prop}\label{Prop:fixed:pointwise}
$\Delta^g=\{\sigma\in \Delta : \sigma\text{ \normalfont fixed pointwise by $g$}\}$.
\end{prop}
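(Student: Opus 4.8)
The plan is to separate the two inclusions, which have very different flavors. The inclusion $\{\sigma\in\Delta:\sigma\text{ fixed pointwise by }g\}\subseteq\Delta^g$ is immediate: if $g$ fixes every vertex of $\sigma$ (hence every face of $\sigma$), then in particular $g\sigma=\sigma$. So all the content lies in the reverse inclusion $\Delta^g\subseteq\{\sigma\in\Delta:\sigma\text{ fixed pointwise by }g\}$, and for that I would run the ``type-function rigidity'' argument sketched in the introduction.

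Concretely, take $\sigma\in\Delta^g$ and write $\sigma=hG_{R\setminus I}$ with $I=\type(\sigma)$. By the description recalled in \S\ref{Milnor:Intro} and by Theorem~\ref{Theorem:Definition}, the vertex set of $\sigma$ is exactly $\{hG_{R\setminus\{r\}}:r\in I\}$, and $\type(hG_{R\setminus\{r\}})=\{r\}$, so distinct vertices of $\sigma$ have distinct types. Since $g\sigma=\sigma$, the simplicial automorphism $g$ permutes the vertex set of $\sigma$; and since the type function is $G$-invariant (Theorem~\ref{Theorem:Definition}(ii)), $g$ must carry the vertex of type $\{r\}$ to a vertex of type $\{r\}$. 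As $\sigma$ has exactly one vertex of each type $\{r\}$, $r\in I$, it follows that $g$ fixes every vertex of $\sigma$, hence every face of $\sigma$; that is, $\sigma$ is fixed pointwise by $g$.

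There is essentially no obstacle here: the proposition is just the abstract-simplicial-complex shadow of the fact that a type-preserving automorphism cannot move a simplex within itself. The only things requiring (elementary) care are the bookkeeping points that the vertices of $hG_{R\setminus I}$ are precisely the cosets $hG_{R\setminus\{r\}}$ with $r\in I$, and that these carry pairwise distinct types; both are contained in the structure recorded in Theorem~\ref{Theorem:Definition}.
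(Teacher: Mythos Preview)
Your proof is correct and follows essentially the same argument as the paper's own proof: $g$ induces a type-preserving permutation of the vertices of $\sigma$, and since those vertices have pairwise distinct types, each must be fixed. Your write-up is slightly more explicit (separating the two inclusions and invoking Theorem~\ref{Theorem:Definition}(ii) for $G$-invariance of the type function), but there is no substantive difference in approach.
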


\begin{proof}
An element $g\in G$ fixing a simplex $hG_{R\setminus I}\in \Delta$ 
effects a type-preserving 
permutation of the vertices $hG_{R\setminus\{r\}}$ ($r\in I$) of the simplex. 
Since no two of these vertices have the same type, 
the element $g$ must fix each of the vertices.
\end{proof}

Proposition~\ref{Prop:fixed:pointwise} 
gives the following description of walls.

\begin{prop}\label{Prop:Walls:Desc}
The walls of $\Delta$ are 
\begin{equation}
\Delta^r=\{\sigma\in \Delta : r\sigma=\sigma\},\quad\text{$r\in G$ a reflection}.
\end{equation}
Equivalently, the walls of $\Delta$ are (up to isomorphism) 
\begin{equation}\label{Wall:Join}
\Delta(G_1,R_1) * \ldots *\Delta(G_i,R_i)^{t} * \ldots * \Delta(G_N,R_N),\quad 
t\in G_i\ \text{a reflection}.
\end{equation}
\end{prop}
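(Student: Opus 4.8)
The plan is to read off both descriptions from results already established. The first is essentially immediate: by definition a wall is the subcomplex of simplices of $\Delta$ fixed pointwise by some reflection $r$ of $G$, and Proposition~\ref{Prop:fixed:pointwise} identifies this subcomplex with $\Delta^r=\{\sigma\in\Delta:r\sigma=\sigma\}$. Hence the walls of $\Delta$ are exactly the $\Delta^r$ with $r\in G$ a reflection.

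For the second description I would first record the elementary fact that every reflection of $G$ lies in a single irreducible factor $G_i$ and is there a reflection of $G_i$. Indeed, the reflections of $G$ are the non-identity elements conjugate to a power of some generator $r_j\in R$; each $r_j$, hence each power of it, lies in the factor $G_i$ with $r_j\in R_i$; and conjugating by $g=g_1g_2\cdots g_N$ with $g_k\in G_k$ has the same effect as conjugating by $g_i$ alone, since distinct factors commute, so the conjugate again lies in $G_i$. Conversely, any reflection of some $G_i$ is a reflection of $G$.

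Next I would transport $\Delta^r$ along the type-preserving $G$-equivariant isomorphism $\Delta\cong\Delta_1*\Delta_2*\cdots*\Delta_N$ of Theorem~\ref{Theorem:Definition}. Fix a reflection $r$ and let $G_i$ be the factor containing it. A simplex of the join is a formal join $\sigma_1*\sigma_2*\cdots*\sigma_N$ with each $\sigma_k$ a simplex of $\Delta_k$ or empty, and $G$ acts factorwise; writing $r$ with the identity in every slot but the $i$th, we see $r$ fixes every $\Delta_k$ pointwise for $k\neq i$ and acts on $\Delta_i$ as left translation by $r$. Hence $r.(\sigma_1*\cdots*\sigma_N)=\sigma_1*\cdots*\sigma_N$ if and only if $r\sigma_i=\sigma_i$ in $\Delta_i$, and therefore the fixed subcomplex is $\Delta_1*\cdots*\Delta_{i-1}*\Delta_i^r*\Delta_{i+1}*\cdots*\Delta_N$, which is the complex in \eqref{Wall:Join} with $t=r$ (note $\Delta_i^r=\Delta(G_i,R_i)^r$ by Proposition~\ref{Prop:fixed:pointwise} applied to $G_i$). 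Letting $r$ range over all reflections of $G$, equivalently letting $i$ range over $\{1,\ldots,N\}$ and $t$ over the reflections of $G_i$, and using the converse from the previous paragraph, we conclude that the walls of $\Delta$ are, up to isomorphism, precisely the complexes listed in \eqref{Wall:Join}.

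I do not expect a genuine obstacle. The one place calling for a little care is the bookkeeping for the join action: verifying that the subcomplex fixed by a factorwise action is the join of the fixed subcomplex in the relevant factor with the remaining factors untouched. Everything else is formal, resting on the join isomorphism of Theorem~\ref{Theorem:Definition} and on the observation that reflections of $G$ are reflections of its irreducible factors.
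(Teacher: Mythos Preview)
Your proposal is correct and follows essentially the same approach as the paper: the first description is read off from Proposition~\ref{Prop:fixed:pointwise}, and the second from the join isomorphism of Theorem~\ref{Theorem:Definition}\eqref{Join:Isom}. The paper's proof is terse (it just cites these two results), whereas you spell out the bookkeeping that reflections of $G$ live in a single factor and that the fixed subcomplex of a factorwise action is the join with one factor replaced by its fixed subcomplex; this is exactly the content the paper leaves implicit.
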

\begin{proof}
The first part is by Proposition~\ref{Prop:fixed:pointwise}. 
The second part follows by Theorem~\ref{Theorem:Definition}\eqref{Join:Isom}.
\end{proof}

As a benefit of \eqref{Wall:Join} we have the 
following result that reduces the problem of 
determining when the walls of 
$\Delta$ are Milnor fiber complexes to the case when $G$ is irreducible. 

\begin{prop}\label{Proposition:Reduction:A}
Each wall of $\Delta(G,R)$ is a Milnor fiber complex if and only if 
each wall of each $\Delta(G_i,R_i)$ is a Milnor fiber complex. 
\end{prop}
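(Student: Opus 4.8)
The plan is to deduce Proposition~\ref{Proposition:Reduction:A} directly from the description of walls in \eqref{Wall:Join} together with the join decomposition \eqref{MFC:Def}, using the basic fact that a join $\Delta_1 * \ldots * \Delta_N$ is a Milnor fiber complex precisely when each factor is one \emph{of the appropriate kind}. More concretely, by Proposition~\ref{Prop:Walls:Desc} every wall of $\Delta(G,R)$ has the form
\[
W \;=\; \Delta(G_1,R_1) * \ldots * \Delta(G_i,R_i)^{t} * \ldots * \Delta(G_N,R_N)
\]
for some index $i$ and some reflection $t\in G_i$, while on the other hand every Milnor fiber complex $\Delta(H,S)$ has, by \eqref{MFC:Def}, the form $\Delta(H_1,S_1)*\ldots*\Delta(H_M,S_M)$ with each $\Delta(H_k,S_k)$ the Milnor fiber complex of an irreducible group. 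So the question is whether the wall $W$, as an abstract simplicial complex, can be rewritten in this latter form; and since all the factors $\Delta(G_j,R_j)$ for $j\neq i$ are already Milnor fiber complexes of irreducible groups, this comes down to whether the single ``twisted'' factor $\Delta(G_i,R_i)^t$ — a wall of $\Delta(G_i,R_i)$ — is itself a Milnor fiber complex.

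First I would record the elementary join lemma: if $X \cong X_1 * \ldots * X_M$ and $Y \cong Y_1 * \ldots * Y_L$ are two decompositions of the same complex into joins of irreducible Milnor fiber complexes (equivalently, into complexes that are not themselves nontrivial joins), then $M = L$ and the factors agree up to a permutation. This is just the statement that the join factorization into join-irreducibles is unique; it follows because the join-irreducible factors of a complex are detected intrinsically (for instance as the restrictions of $\Delta$ to the connected components of its ``non-adjacency'' graph on vertices, or equivalently via the type function and Proposition~\ref{Lemma:Products} applied to vertices), and these are preserved by isomorphism. I would state this as a short lemma or simply cite the standard uniqueness of join decompositions.

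With that in hand the proof is a clean two-way implication. For the forward direction: suppose every wall of $\Delta(G,R)$ is a Milnor fiber complex, and let $t\in G_i$ be any reflection. The reflection $t$, viewed inside $G$ via the inclusion $G_i\hookrightarrow G$, is a reflection of $G$, and the corresponding wall is $W$ above; by hypothesis $W\cong\Delta(H,S)$ for some admissible $H$. Decomposing both sides into join-irreducibles and invoking the uniqueness lemma, the factor $\Delta(G_i,R_i)^t$ must be a join of some subset of the irreducible Milnor fiber complexes appearing on the right, hence is itself a Milnor fiber complex; as $t$ was arbitrary, every wall of $\Delta(G_i,R_i)$ is a Milnor fiber complex. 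For the converse: if every wall of each $\Delta(G_j,R_j)$ is a Milnor fiber complex, then in particular $\Delta(G_i,R_i)^t\cong\Delta(H_i,S_i)$ for some admissible $H_i$, and substituting into \eqref{Wall:Join} gives
\[
W \;\cong\; \Delta(G_1,R_1) * \ldots * \Delta(H_i,S_i) * \ldots * \Delta(G_N,R_N),
\]
which by Theorem~\ref{Theorem:Definition}\eqref{Join:Isom} (applied to the group $G_1\times\ldots\times H_i\times\ldots\times G_N$, or simply by associativity of the join together with the fact that a join of Milnor fiber complexes of Coxeter/Shephard-type factors is again one) is the Milnor fiber complex of that product group. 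Since every wall of $\Delta(G,R)$ has this shape for some $i$, we are done.

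The only genuinely delicate point is the uniqueness of the join decomposition into irreducible Milnor fiber complexes — more precisely, that if $\Delta(G_i,R_i)^t$ happens to split as a nontrivial join, that splitting is compatible with a decomposition as a product of admissible groups, so that the conclusion ``$\Delta(G_i,R_i)^t$ is a Milnor fiber complex'' is meaningful and correctly captured. I expect this to be routine: join-irreducible Milnor fiber complexes are exactly those $\Delta(H,S)$ with $H$ irreducible, a complex determines its join-irreducible factors intrinsically, and being a Milnor fiber complex is closed under join (this is precisely the content of \eqref{MFC:Def}). I would therefore spend the bulk of the write-up stating the join-uniqueness lemma cleanly and then let the two implications fall out in a few lines each; no case analysis over the classification is needed here, since that is deferred to the irreducible case handled later in \S\ref{Section:Proof:A}.
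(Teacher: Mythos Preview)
Your converse direction matches the paper's exactly. For the forward direction, however, the paper takes a shorter route that sidesteps the join-uniqueness lemma entirely. Rather than decomposing both sides into join-irreducibles and matching factors, the paper simply observes that $\Delta(G_i,R_i)^t$ is the \emph{link} in the wall
\[
W=\Delta(G_1,R_1)*\ldots*\Delta(G_i,R_i)^t*\ldots*\Delta(G_N,R_N)
\]
of any simplex of type $R\setminus R_i$ (that is, of a chamber in the join of the remaining factors). If $W$ is a Milnor fiber complex then Proposition~\ref{Lemma:Products} says every link in it is again a Milnor fiber complex, and the forward implication follows in one line.

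Your route is not wrong, but it is heavier than necessary. To make it rigorous you would need to prove that $\Delta(H,S)$ is join-irreducible whenever $H$ is irreducible (so that the factors $\Delta(H_k,S_k)$ really are the join-irreducible pieces of $\Delta(H,S)$), and then invoke a general uniqueness statement for join decompositions of simplicial complexes. Both are true, but neither is stated in the paper, and you yourself flag the second as ``the only genuinely delicate point.'' The link argument via Proposition~\ref{Lemma:Products} dispenses with both: links are preserved under simplicial isomorphism and the proposition already tells you links in Milnor fiber complexes are Milnor fiber complexes, so no structural lemma about joins is needed.
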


\begin{proof}
If the wall in \eqref{Wall:Join} is a Milnor fiber complex, then 
it follows from Proposition~\ref{Lemma:Products} 
 that the wall 
$\Delta(G_i,R_i)^{t}$ of $\Delta(G_i,R_i)$ is a Milnor fiber complex. 
Conversely, if the wall $\Delta(G_i,R_i)^{t}$ of $\Delta(G_i,R_i)$ 
is a Milnor fiber complex 
$\Delta(G_i',R_i')$, then the wall in \eqref{Wall:Join} is 
the Milnor fiber complex of $G_1\times \ldots\times G_i'\times\ldots\times G_N$. 
\end{proof}

\subsection{Topological results}\label{SS:Top}
The following theorem tells us the homotopy type of the subcomplex $\Delta^g$ 
for any element $g\in G$. In particular, it tells us the homotopy type of 
any wall $\Delta^r$. It is due to Orlik~\cite{O:Milnor} and appears 
in the proof of his Theorem 4.1 on p.~145 where he observes that 
$\Delta^g$ is a deformation retract of the intersection of 
the fixed space $V^g$ and the Milnor fiber of $G$, 
which has an isolated critical point at the origin. 

\begin{thm*}[Orlik]
If $G$ is irreducible and $g\in G$, then the subcomplex $\Delta^g$ is 
homotopy equivalent to a bouquet of $(d_1-1)^p$ many $(p-1)$-spheres,  
where $p=\dim V^g$.
\end{thm*}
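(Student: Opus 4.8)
The plan is to realize $\Delta^g$ as an equivariant deformation retract of a concrete affine-algebraic set and then invoke Milnor's fibration theorem for a weighted-homogeneous polynomial with an isolated singularity. First I would recall from \S\ref{MF:Details} that when $G$ is irreducible the Milnor fiber complex $\Delta$ has a geometric realization inside $V$ that is $G$-homeomorphic to an equivariant strong deformation retract of the Milnor fiber $F_G=P_1^{-1}(1)$, where $P_1$ is the (unique up to scalar) invariant of smallest degree $d_1$; this is exactly Orlik's construction in \cite{O:Milnor}. Restricting this retraction to the fixed locus of $g$, one obtains that the subcomplex $\Delta^g$ is a deformation retract of $F_G\cap V^g=\{v\in V^g : P_1(v)=1\}$. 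The key point is that the retraction can be chosen to commute with the linear action of $\langle g\rangle$, so that it genuinely carries $F_G\cap V^g$ onto $\Delta^g$; this is where one cites the equivariance built into the realization.

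Next I would analyze the polynomial $f:=P_1|_{V^g}$ on the $p$-dimensional space $V^g$. Since $P_1$ is homogeneous of degree $d_1$, so is $f$, hence $f$ is weighted-homogeneous (indeed homogeneous) on $V^g$. The crucial claim is that $f$ has an isolated critical point at the origin, equivalently that $0$ is its only zero of the gradient; Orlik observes this, and the reason is that $P_1$ vanishes to order exactly one along each reflecting hyperplane and the arrangement restricted to $V^g$ still has $0$ as the only point lying on "enough" hyperplanes — more cleanly, $V^g$ is itself the fixed space of a reflection subgroup (the centralizer-type subgroup or the pointwise stabilizer generated by reflections fixing $V^g$) acting irreducibly or as a product on $V^g$, and for an irreducible reflection group the lowest-degree invariant always has an isolated singularity by property~\eqref{Milnor:Inequality} together with the genericity of $P_1$. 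Granting this, Milnor's theorem applies: the Milnor fiber of a weighted-homogeneous polynomial in $p$ variables with isolated singularity is homotopy equivalent to a bouquet of $(p-1)$-spheres, and the number of spheres is the Milnor number $\mu(f)$.

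The last step is the computation $\mu(f)=(d_1-1)^p$. For a homogeneous polynomial of degree $d_1$ in $p$ variables with an isolated singularity the Milnor number equals $(d_1-1)^p$ by the standard formula $\mu=\prod(\deg-1)$ for a weighted-homogeneous isolated singularity with all weights equal; equivalently one computes $\dim_{\CC}\CC[x_1,\dots,x_p]/(\partial_1 f,\dots,\partial_p f)$, and for a generic (hence any isolated-singularity) homogeneous $f$ of degree $d_1$ the partials form a regular sequence of $p$ forms of degree $d_1-1$, giving this dimension by Bézout. Assembling: $\Delta^g\simeq F_G\cap V^g = $ Milnor fiber of $f$ $\simeq$ bouquet of $(d_1-1)^p$ spheres of dimension $p-1$, as claimed.

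The main obstacle is the isolated-singularity claim for $P_1|_{V^g}$: one must be sure that restricting the minimal invariant to the fixed subspace does not create a non-isolated singularity. I expect to handle this by identifying $V^g$ with the reflection representation of the parabolic-type reflection group $G'$ generated by the reflections of $G$ fixing $V^g$ pointwise, noting $V^g$ decomposes $G'$-equivariantly into irreducibles, checking that $P_1$ restricts (up to lower-order-in-the-complement terms that vanish on $V^g$) to a product of the minimal invariants of the irreducible factors, and then using that each such minimal invariant has an isolated singularity — the only subtlety being the degenerate factors where $V^g$ meets an irreducible summand in a proper subspace, which one rules out because the lowest-degree invariant of an irreducible reflection group cannot vanish identically on any reflection-fixed hyperplane (its zero set is reduced and irreducible of the expected dimension). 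Everything else is a direct appeal to Milnor's fibration theorem and a Bézout count.
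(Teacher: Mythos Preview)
Your overall strategy matches the paper's account of Orlik's argument exactly: the paper does not give its own proof but simply cites \cite{O:Milnor} and records the two key facts you isolate, namely that $\Delta^g$ is a deformation retract of $V^g\cap F_G$, and that $P_1|_{V^g}$ has an isolated critical point at the origin. After that, Milnor's theorem for a homogeneous polynomial of degree $d_1$ in $p$ variables with an isolated singularity gives the bouquet of $(d_1-1)^p$ spheres, just as you say. So at the level of detail the paper operates, your proposal is on target.

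There is, however, a genuine gap in the part you flag as ``the main obstacle''. You propose to view $V^g$ as the reflection representation of the group $G'$ generated by the reflections of $G$ that fix $V^g$ pointwise, and then to argue that $P_1$ restricts to a product of minimal invariants of the irreducible factors of $G'$. But $G'$ acts \emph{trivially} on $V^g$ by construction; its reflection representation sits on a $G'$-invariant complement of $V^g$, not on $V^g$ itself. Consequently the phrase ``$V^g$ decomposes $G'$-equivariantly into irreducibles'' yields only copies of the trivial representation, and there are no minimal invariants on $V^g$ coming from $G'$ to form a product. The isolated-singularity claim is exactly what Orlik establishes in \cite{O:Milnor}, and neither the paper nor your sketch supplies an independent proof. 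If you want a self-contained argument you will need a different mechanism: for Coxeter groups it is immediate because $P_1$ is a positive-definite quadratic form (so its restriction to any subspace is nondegenerate), while for genuine Shephard groups Orlik's argument uses the explicit realization of $\Delta$ via the associated regular complex polytope rather than any parabolic-restriction trick.
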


If $G$ is reducible, then the homotopy type of a given 
$\Delta^g$ is read off from Orlik's result and 
\eqref{MFC:Def}.  We highlight the case $g=1$. 
This case is used in the proof of Theorem~\ref{Theorem:A} to help 
determine if a wall $\Delta^r$ is 
a Milnor fiber complex $\Delta(W,S)$ for some possibly reducible~$W$. 

\begin{prop}\label{Cor:Orlik}
Let $n_i$ and $\delta_i$ be the rank and smallest basic degree of 
the irreducible factor $G_i$, so that $n=n_1+n_2+\ldots+n_N$. 
Then the Milnor fiber complex of $G$ is homotopy equivalent to a bouquet of 
$(\delta_1-1)^{n_1}(\delta_2-1)^{n_2}\ldots(\delta_N-1)^{n_N}$ many $(n-1)$-spheres. 
\end{prop}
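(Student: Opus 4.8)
The plan is to reduce to Orlik's theorem for irreducible groups via the join decomposition~\eqref{MFC:Def} and a standard fact about the homotopy type of joins of wedges of spheres. First I would recall that by Theorem~\ref{Theorem:Definition}\eqref{Join:Isom} we have a $G$-equivariant isomorphism $\Delta\cong\Delta_1*\Delta_2*\ldots*\Delta_N$ with $\Delta_k=\Delta(G_k,R_k)$. Applying Orlik's theorem to each irreducible factor $G_k$ with $g=1$ (so that $p=\dim V_k=n_k$, where $V_k$ is the reflection representation of $G_k$) gives that $\Delta_k$ is homotopy equivalent to a bouquet of $(\delta_k-1)^{n_k}$ many $(n_k-1)$-spheres.

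Next I would invoke the elementary topological fact that the join of spheres $S^{a}*S^{b}$ is homeomorphic to $S^{a+b+1}$, together with the fact that joins distribute over wedges up to homotopy: if $X\simeq\bigvee_{i=1}^{a}S^{p}$ and $Y\simeq\bigvee_{j=1}^{b}S^{q}$ are wedges of spheres, then $X*Y\simeq\bigvee_{i,j}(S^{p}*S^{q})\simeq\bigvee^{ab}S^{p+q+1}$. (One way to see this cleanly: the join of CW complexes is homotopy equivalent to $\Sigma(X\wedge Y)$ shifted appropriately, $X*Y\simeq \Sigma (X\wedge Y)$, and smash products distribute over wedges, while $S^p\wedge S^q\simeq S^{p+q}$ so $\Sigma(S^p\wedge S^q)\simeq S^{p+q+1}$.) Iterating this over the $N$ factors, the join $\Delta_1*\ldots*\Delta_N$ is homotopy equivalent to a bouquet of $\prod_{k=1}^N(\delta_k-1)^{n_k}$ spheres, each of dimension $\sum_{k=1}^N(n_k-1)+(N-1)=\bigl(\sum_k n_k\bigr)-1=n-1$. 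This is exactly the asserted homotopy type.

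The only subtle points are bookkeeping ones: verifying that the sphere dimensions add up correctly under iterated joins — each join of an $a$-sphere and a $b$-sphere raises the dimension to $a+b+1$, and with $N$ factors one accrues $N-1$ extra $+1$'s, which combines with $\sum(n_k-1)=n-N$ to give $n-1$ — and confirming that $p=n_k$ is the right value of $\dim V^g$ for the identity element $g=1$ in Orlik's theorem, since $V^{1}=V_k$ has dimension $n_k$ for the irreducible factor. I do not expect a genuine obstacle here; the statement is essentially a formal consequence of Orlik's theorem and the behavior of joins, and the main task is simply to assemble these pieces carefully. The edge cases ($N=0$, giving the empty join which is the $(-1)$-sphere, i.e. the empty complex, matching the trivial group of rank $0$; and a single factor $N=1$, where the statement reduces verbatim to Orlik's) should be noted for completeness.
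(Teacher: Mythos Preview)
Your proposal is correct and follows the same line as the paper's proof: apply Orlik's theorem with $g=1$ to each irreducible factor to identify $\Delta_k$ as a bouquet of $(\delta_k-1)^{n_k}$ many $(n_k-1)$-spheres, then use the join decomposition \eqref{MFC:Def} and the standard behavior of joins of wedges of spheres to conclude. The paper's proof is terser (it simply says ``Hence the result'' after these two observations), but your added bookkeeping on dimensions and the $N=0,1$ edge cases is accurate and does no harm.
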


\begin{proof}
The Milnor fiber complex is the join $\Delta_1*\Delta_2*\ldots*\Delta_N$ 
of the complexes $\Delta_i$ of the irreducible factors $G_i$, 
and  
Orlik's theorem with $g=1$ tells us that $\Delta_i$ is a bouquet of 
$(\delta_i-1)^{n_i}$ many $(n_i-1)$-spheres. Hence the result.
\end{proof}

From Proposition~\ref{Cor:Orlik} we get the 
following characterization of Coxeter complexes 
as Milnor fiber complexes that are spheres. 

\begin{prop}\label{Cor:Coxeter:Sphere}%
\! A Milnor fiber complex is a Coxeter complex if and only if it~is~a~sphere.%
\end{prop}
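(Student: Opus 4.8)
The plan is to read the statement off Proposition~\ref{Cor:Orlik}. A Milnor fiber complex $\Delta=\Delta(G,R)$ has, by \eqref{G:Decomposition} and \eqref{MFC:Def}, a canonical decomposition into irreducible factors $G_1,\ldots,G_N$; let $n_i$ and $\delta_i$ be the rank and smallest basic degree of $G_i$. By Proposition~\ref{Cor:Orlik}, $\Delta$ is homotopy equivalent to a bouquet of $\prod_{i}(\delta_i-1)^{n_i}$ spheres of dimension $n-1$. Recall that $G$ is a Coxeter group precisely when each irreducible factor $G_i$ is a Coxeter group, and by the invariant-theory fact recalled in \S\ref{Section:Degrees}\eqref{Coxeter:Inequality} this happens precisely when $\delta_i=2$ for every $i$.

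First I would prove the forward direction: if $\Delta$ is a Coxeter complex, then it is a sphere. This is classical --- the Coxeter complex of a finite Coxeter group is a triangulation of the $(n-1)$-sphere, obtained by intersecting the reflecting hyperplanes with the unit sphere in the canonical real reflection representation --- so one can simply cite \cite{Tits}. Alternatively it follows from Proposition~\ref{Cor:Orlik} itself: if $\Delta$ is the Milnor fiber complex of a Coxeter group then every $\delta_i=2$, so the bouquet has $\prod_i 1^{n_i}=1$ sphere, i.e.\ $\Delta$ is homotopy equivalent to a single $(n-1)$-sphere; combined with the fact that $\Delta$ is then a genuine simplicial sphere this gives the claim. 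The cleaner route is to note this direction is just the well-known geometric realization and invoke it.

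For the converse, suppose the Milnor fiber complex $\Delta$ is a sphere; in particular it is homotopy equivalent to a single sphere of some dimension, which must be $n-1$ since $\Delta$ is an $(n-1)$-dimensional chamber complex. Then Proposition~\ref{Cor:Orlik} forces $\prod_{i=1}^{N}(\delta_i-1)^{n_i}=1$. Since each $n_i\geq 1$ and each $\delta_i\geq 2$, every factor $(\delta_i-1)^{n_i}$ is a positive integer $\geq 1$, so the product equals $1$ only if $\delta_i-1=1$, i.e.\ $\delta_i=2$, for all $i$. By \S\ref{Section:Degrees}\eqref{Coxeter:Inequality} each $G_i$ is then a Coxeter group, hence $G=G_1\times\cdots\times G_N$ is a Coxeter group and $\Delta$ is its Coxeter complex.

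The only mild subtlety --- and the step I would be most careful about --- is making sure ``is a sphere'' is used at the right level of strength. Proposition~\ref{Cor:Orlik} only controls the homotopy type, so all I may extract from ``$\Delta$ is a sphere'' is that it is homotopy equivalent to a sphere, and that the bouquet count is $1$; the above argument uses exactly that, so no homeomorphism-level input is needed for the converse. Conversely, for the forward implication I do want the genuine fact that a Coxeter complex \emph{is} a triangulated sphere, not merely homotopy-spherical, which is why that direction is best handled by citing the standard realization \cite{Tits} rather than by Proposition~\ref{Cor:Orlik} alone.
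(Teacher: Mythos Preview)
Your proof is correct and follows essentially the same route as the paper's: both read the statement off Proposition~\ref{Cor:Orlik} by observing that the bouquet count $\prod_i(\delta_i-1)^{n_i}$ equals $1$ if and only if every $\delta_i=2$, and then invoking \S\ref{Section:Degrees}\eqref{Coxeter:Inequality}. The paper treats the two directions in one stroke (interpreting ``is a sphere'' at the homotopy level throughout) and explicitly notes the degenerate case $N=0$, but the substance is the same.
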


\begin{proof}
Proposition~\ref{Cor:Orlik} says $\Delta$ 
is a bouquet of $(\delta_1-1)^{n_1}(\delta_2-1)^{n_2}\ldots(\delta_N-1)^{n_N}$ 
many $(n-1)$-spheres. If $N=0$, then $\Delta$ is a single $(-1)$-sphere and $G$ 
is the trivial Coxeter group $\langle\emptyset\rangle$. If $N>0$, 
then the inequalities $n_i\geq 1$ and $\delta_i\geq 2$ 
imply that the number of spheres equals 1 if and only if each $\delta_i$ equals~$2$. 
Hence by \S\ref{Section:Degrees}\eqref{Coxeter:Inequality} the Milnor fiber complex 
$\Delta$ is a single sphere if and only if each $G_i$   
is a Coxeter group, i.e., if and only if $G$ is a Coxeter group. 
\end{proof}

As a corollary of Proposition~\ref{Cor:Coxeter:Sphere} 
we have the following.

\begin{prop}\label{Lemma:Coxeter:Walls}
A wall of a Coxeter complex is a Milnor fiber complex 
if and only if it is a Coxeter complex.
\end{prop}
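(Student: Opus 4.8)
The plan is to deduce Proposition~\ref{Lemma:Coxeter:Walls} as an immediate consequence of Proposition~\ref{Cor:Coxeter:Sphere} together with Orlik's theorem on the homotopy type of walls. First I would let $\Delta$ be the Coxeter complex of a Coxeter group $G$ and let $\Delta^r$ be a wall, where $r\in G$ is a reflection. One direction is trivial: if $\Delta^r$ is a Coxeter complex then it is in particular a Milnor fiber complex, since every Coxeter complex is a Milnor fiber complex (of a Coxeter group). So the content is the forward direction.

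For the forward direction, suppose $\Delta^r$ is a Milnor fiber complex, say $\Delta^r\cong\Delta(W,S)$ for some admissible $W$. I would first reduce to the case $G$ irreducible using Proposition~\ref{Prop:Walls:Desc}\eqref{Wall:Join}: a wall of a reducible $\Delta(G,R)$ is a join of the $\Delta(G_j,R_j)$ for $j\neq i$ with a wall $\Delta(G_i,R_i)^t$, and a join is a sphere if and only if each join factor is a sphere, so it suffices to treat each irreducible factor; alternatively one can argue directly since $G$ a Coxeter group forces each $G_i$ to be a Coxeter group. Now with $G$ irreducible (and a Coxeter group, so $d_1=2$), Orlik's theorem says the wall $\Delta^r$ is homotopy equivalent to a bouquet of $(d_1-1)^p=1^p=1$ many $(p-1)$-spheres, where $p=\dim V^r=n-1$. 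Hence $\Delta^r$ is homotopy equivalent to a single $(n-2)$-sphere, i.e., $\Delta^r$ is a sphere. But then $\Delta^r\cong\Delta(W,S)$ is a Milnor fiber complex that is a sphere, so by Proposition~\ref{Cor:Coxeter:Sphere} it is a Coxeter complex, as desired.

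The only place requiring care — and the reason to invoke Proposition~\ref{Cor:Orlik} rather than just the irreducible form of Orlik's theorem — is the possibility that the complex $W$ witnessing $\Delta^r$ as a Milnor fiber complex is itself reducible. In that case I would apply Proposition~\ref{Cor:Orlik} to $W$: the bouquet count for $\Delta(W,S)$ is $\prod_j(\delta_j-1)^{n_j}$ over the irreducible factors of $W$, and this equals $1$ (the bouquet count forced by Orlik's theorem applied in $G$) if and only if every $\delta_j=2$, i.e., if and only if $W$ is a Coxeter group. So the main obstacle — matching up homotopy data across the two complexes — is handled cleanly by the same numerology already set up in Propositions~\ref{Cor:Orlik} and~\ref{Cor:Coxeter:Sphere}. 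I expect the proof to be three or four lines once these references are in place.

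\begin{proof}
Every Coxeter complex is a Milnor fiber complex, so one direction is immediate. Conversely, suppose a wall $\Delta^r$ of the Coxeter complex $\Delta$ is a Milnor fiber complex. By Proposition~\ref{Prop:Walls:Desc} we may assume $G$ is irreducible, in which case $G$ being a Coxeter group means $d_1=2$. Then $p=\dim V^r=n-1$, and Orlik's theorem says $\Delta^r$ is homotopy equivalent to a bouquet of $(d_1-1)^p=1$ many $(n-2)$-spheres; that is, $\Delta^r$ is a sphere. A Milnor fiber complex that is a sphere is a Coxeter complex by Proposition~\ref{Cor:Coxeter:Sphere}.
\end{proof}
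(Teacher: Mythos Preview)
Your proof is correct and reaches the same endpoint as the paper---apply Proposition~\ref{Cor:Coxeter:Sphere} once you know the wall is a sphere---but you arrive at ``the wall is a sphere'' by a different route. The paper simply observes that a wall of a Coxeter complex is the equator of a sphere (the Coxeter complex itself being a triangulated sphere), which is an elementary geometric fact about real reflection groups and requires no reduction to the irreducible case. You instead reduce to irreducible $G$ and invoke Orlik's theorem with $d_1=2$ to get a bouquet of one sphere. That works, and has the virtue of staying entirely within the homotopy framework already set up for general Milnor fiber complexes, so no separate geometric input is needed. On the other hand, Orlik's theorem is heavier than necessary here, and your reduction step is slightly under-cited: Proposition~\ref{Prop:Walls:Desc} alone only gives the join decomposition of the wall; to pass from ``the full wall is a Milnor fiber complex'' to ``the factor $\Delta(G_i,R_i)^t$ is a Milnor fiber complex'' (or, alternatively, to argue that a join of spheres is a sphere) you need a further word---either the link argument of Proposition~\ref{Lemma:Products} as in the proof of Proposition~\ref{Proposition:Reduction:A}, or the observation that each remaining join factor $\Delta(G_j,R_j)$ is already a sphere.
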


\begin{proof} 
Consider a wall of a Coxeter complex. If it is a Coxeter complex, then 
it is a Milnor fiber complex. If it is a Milnor fiber complex, 
then because it is also a sphere (being the equator of a sphere) 
Proposition~\ref{Cor:Coxeter:Sphere} tells us that it is 
a Coxeter complex.
\end{proof}

\subsection{Enumerative results}\label{SS:Enum}
We require some enumerative results about the number of 
chambers of a wall. Denote by $f_k(\Sigma)$ the number of 
$k$-simplices in a complex $\Sigma$ so~that
\[f_k(\Sigma)=\#\{\sigma\in\Sigma : \dim \sigma=k\}.\]

The chambers of $\Delta$ are indexed by 
elements of $G$, and the number of 
elements of $G$ equals the product 
of basic degrees $d_1,d_2,\ldots,d_\rk$. 
Hence $f_{\rk-1}(\Delta)$ equals $d_1d_2\ldots d_\rk$. 
Suppose for the rest of \S\ref{SS:Enum} 
that $G$ is irreducible. 
Then Eq.~\eqref{Eq:Chamber:Count} below tells us that 
$f_{\rk-2}(\Delta^r)$ equals $d_1d_2\ldots d_{\rk-1}$ 
for any reflection $r$ in $G$. 
It is natural to wonder then 
if a similar formula holds for elements $g$ where 
$\dim V^g$ equals $\rk-2$, $\rk-3$, and so on. 
Remarkably this  
turns out to be the case if and only if the diagram of $G$ 
does not contain any subdiagram of type $D_4$, $F_4$, or $H_4$. This 
is Theorem~\ref{Theorem:Chamber:Count} below. It 
is a consequence of a collection of observations from~\cite{M1} 
about Orlik--Solomon coexponents. 

Continue to suppose that $G$ is irreducible. Let $L$ 
be the collection of all fixed spaces $V^g$  
ordered by reverse inclusion, so that $V$ is at the bottom. This is the same as the  
lattice of intersections of reflecting hyperplanes. 
For $\mu$ the M\"obius function of~$L$ and $X\in L$ 
define $B_X(t)\in\ZZ[t]$ by 
$B_X(t)=(-1)^{\dim X}\sum_{Y\geq X}\mu(X,Y)(-t)^{\dim Y}$. 
Then Orlik \cite{O:Milnor} (following Orlik--Solomon in the Coxeter case) showed that 
\begin{equation}
f_{k-1}(\Delta^g)=\sum B_Y(d_1-1)
\end{equation}
where the sum is over all $k$-dimensional subspaces $Y$ that lie above $V^g$ in $L$. 
In particular
\begin{equation}
f_{p-1}(\Delta^g)=B_X(d_1-1)\label{B:Chamber:Count}
\end{equation} 
for $X=V^g$ and $p=\dim X$. 
Furthermore for $X\in L$ of dimension $p$ there exist positive 
integers $b_1^X\leq b_2^X\leq\ldots \leq b_p^X$ such that 
\begin{equation}
B_X(t)=(t+b_1^X)(t+b_2^X)\ldots(t+b_p^X).
\end{equation}
Orlik and Solomon determined the $b_i^X$'s for all 
irreducible Coxeter and Shephard groups. 
The tables in \cite{OS:Unitary,OS:Cox} list the $b_i^X$'s for each $X\in L$ 
when $G$ is an exceptional group of rank at least 3. Following \cite{M1} 
we make the following remarkable observation.

\begin{thm}\label{Theorem:Chamber:Count}
If $G$ is an irreducible Coxeter or Shephard group, then 
\begin{equation}\label{Eq:Chamber:Count}
f_{\rk-2}(\Delta^r)=d_1d_2\ldots d_{\rk-1}
\end{equation}
for any reflection $r\in G$, and the following are equivalent:
\begin{enumerate}[\rm(i)]
\item $f_{\rk-3}(\Delta^g)=d_1d_2\ldots d_{\rk-2}$ for $g\in G$ such that $\dim V^g=\rk-2$.\label{Eq:Count:1}
\item $f_{p-1}(\Delta^g)=d_1d_2\ldots d_{p}$ for $g\in G$ and $p=\dim V^g$.\label{Eq:Count:2}
\item The diagram of $G$ contains no subdiagram of type $D_4$, $F_4$, or $H_4$.\label{Eq:Count:3}
\end{enumerate}  
\end{thm}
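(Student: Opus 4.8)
The plan is to exploit the product formula $B_X(t)=(t+b_1^X)\cdots(t+b_p^X)$ together with Equation~\eqref{B:Chamber:Count}, which reduces everything to controlling the integers $b_i^X$. First I would establish Equation~\eqref{Eq:Chamber:Count}: a reflecting hyperplane $X=V^r$ has dimension $n-1$, and by \eqref{B:Chamber:Count} we have $f_{n-2}(\Delta^r)=B_X(d_1-1)=(d_1-1+b_1^X)\cdots(d_1-1+b_{n-1}^X)$; so the identity \eqref{Eq:Chamber:Count} is equivalent to the assertion that the multiset $\{d_1-1+b_i^X : 1\le i\le n-1\}$ equals $\{d_1,d_2,\dots,d_{n-1}\}$. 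For a hyperplane $X$, the arrangement restricted to $X$ is the reflection arrangement of the parabolic $G_X$ (the pointwise stabilizer), whose coexponents control the $b_i^X$, and one reads off from the Orlik--Solomon tables and the Shephard--Todd/Coxeter classification that $b_i^X = d_{i+1} - d_1 + 1$ for all hyperplanes $X$ and all $i$; equivalently $d_1 - 1 + b_i^X = d_{i+1}$. (This is the rank-one-fixed-space statement and holds unconditionally — it uses the coexponent tables but no $D_4$/$F_4$/$H_4$ hypothesis.) This gives the first displayed claim of the theorem.

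Next I would prove the cycle of equivalences. The implication \eqref{Eq:Count:2}$\Rightarrow$\eqref{Eq:Count:1} is immediate (take $p=n-2$). For \eqref{Eq:Count:1}$\Rightarrow$\eqref{Eq:Count:3} I would argue by contradiction: if the diagram contains a subdiagram of type $D_4$, $F_4$, or $H_4$, then $G$ has a rank-$4$ standard parabolic $G_I$ of that type, hence (using Proposition~\ref{Lemma:Products} and the fact that a codimension-$2$ fixed space $X$ of $G_I$ inside $V$ corresponds to a rank-$2$ fixed space of $G_I$) there is an $X\in L$ with $\dim V = n$, $\dim X = n-2$, sitting inside the rank-$4$ parabolic, for which the computed value $B_X(d_1-1)$ fails to equal $d_1\cdots d_{n-2}$. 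Concretely this is a finite check: for each of $D_4$, $F_4$, $H_4$ one looks up in \cite{OS:Cox} (and the Shephard-group tables) the $b_i^X$ for a rank-$2$ $X$ and verifies $(d_1-1+b_1^X)(d_1-1+b_2^X)\ne d_1 d_2$ in that parabolic, then transports the failure to $G$ via the join/link structure and the observation that coexponents of $X\in L(G)$ only depend on the parabolic $G_X$. The main obstacle is the converse \eqref{Eq:Count:3}$\Rightarrow$\eqref{Eq:Count:2}: one must show that for \emph{every} $X\in L$ (equivalently, for every parabolic subgroup $G_X$, which is itself a product of irreducible Coxeter and Shephard groups by Steinberg's theorem and Koster's classification) the multiset $\{d_1-1+b_i^X\}$ equals $\{d_1,\dots,d_p\}$ where $p=\dim X$.

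For that converse I would reduce to irreducible parabolics. Since $G_X = \prod_j H_j$ with each $H_j$ irreducible, $B_X(t)=\prod_j B_{X}^{(H_j)}(t)$ factors as a product over the factors, and the smallest degree of $G$ appears as the smallest degree $\delta$ of the unique factor $H_j$ with $\delta = d_1$ (here one uses that $G$ is irreducible, so $d_1 < d_2$ by \S\ref{Section:Degrees}\eqref{Milnor:Inequality}, and that the coexponent indexing is set up so that the ``$d_1-1$'' shift is absorbed by exactly one factor). Thus \eqref{Eq:Count:2} for $G$ follows once one knows the analogous degree-factorization statement for each irreducible parabolic $H_j$ not of type $D_4$, $F_4$, $H_4$, $G_{25}$, $G_{26}$ — wait, the excluded list for \eqref{Eq:Count:3} is only $D_4, F_4, H_4$, so I would instead verify directly, irreducible case by irreducible case using the Orlik--Solomon coexponent tables, that absence of a $D_4$/$F_4$/$H_4$ subdiagram forces $b_i^X = d_{i+1}-d_1+1$ for every $X$. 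This is where the classification does real work: it is precisely the rank-$4$ groups $D_4$, $F_4$, $H_4$ whose coexponents (for the rank-$2$ flats) break the pattern, and every larger bad group contains one of them as a parabolic. I would present this as: (a) the pattern $b_i^X = d_{i+1}-d_1+1$ holds for all $X$ in types $A_n$, $B_n$, $I_2(m)$, $H_3$, $E_6$, $E_7$, $E_8$, and the Shephard groups, by inspection of \cite{OS:Unitary,OS:Cox}; (b) it fails for some rank-$2$ flat in $D_4$, $F_4$, $H_4$; (c) flats and their coexponents are inherited from parabolic subgroups, so the pattern holds for $G$ iff it holds for every rank-$\le 4$ parabolic, iff no $D_4$/$F_4$/$H_4$ parabolic occurs, iff no such subdiagram occurs. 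Combining (a)--(c) with \eqref{B:Chamber:Count} closes the loop and proves the theorem.
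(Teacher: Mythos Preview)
Your argument rests on a claim that is false: that ``coexponents of $X\in L(G)$ only depend on the parabolic $G_X$,'' and more generally that ``flats and their coexponents are inherited from parabolic subgroups.'' By definition $B_X(t)=(-1)^{\dim X}\sum_{Y\ge X}\mu(X,Y)(-t)^{\dim Y}$ sums over $Y\subseteq X$, so the $b_i^X$ are the exponents of the \emph{restricted} arrangement $\mathcal A^X=\{H\cap X:H\in\mathcal A,\ X\not\subset H\}$ on $X$; the pointwise stabilizer $G_X$, by contrast, is generated by reflections whose hyperplanes \emph{contain} $X$ and acts trivially on $X$. (Already for a hyperplane $X$ your sentence ``the arrangement restricted to $X$ is the reflection arrangement of the parabolic $G_X$'' is wrong: $G_X$ is cyclic with empty arrangement on $X$.) Consequently your transport step in \eqref{Eq:Count:1}$\Rightarrow$\eqref{Eq:Count:3}---find a bad codimension-$2$ flat inside a $D_4$/$F_4$/$H_4$ standard parabolic and carry it up to $G$---does not work, because the $b_i^X$ computed in $G$ see all hyperplanes of $G$ meeting $X$, not just those of the parabolic; and your reduction in (c) for \eqref{Eq:Count:3}$\Rightarrow$\eqref{Eq:Count:2} fails for the same reason.

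A visible symptom is your list (a): you assert that the pattern $b_i^X=d_{i+1}-d_1+1$ holds for all $X$ in $E_6$, $E_7$, $E_8$. But each of these contains a $D_4$ subdiagram, so condition~\eqref{Eq:Count:3} fails for them, and the very theorem you are proving says the pattern must therefore fail too (indeed the tables in \cite{OS:Cox} exhibit codimension-$2$ flats in each $E_n$ violating \eqref{Eq:Count:1}). The paper does not attempt any inheritance argument. It cites \cite{M1} for the case-by-case verification of \eqref{Eq:Count:2}$\Leftrightarrow$\eqref{Eq:Count:3} from the Orlik--Solomon tables, notes that \eqref{Eq:Count:2}$\Rightarrow$\eqref{Eq:Count:1} is trivial, and for \eqref{Eq:Count:1}$\Rightarrow$\eqref{Eq:Count:3} directly produces, in each irreducible group whose diagram contains a $D_4$, $F_4$, or $H_4$ subdiagram (namely $D_n$ for $n\ge4$, $E_6$, $E_7$, $E_8$, $F_4$, $H_4$), a specific codimension-$2$ fixed space $Y$ with $B_Y(d_1-1)\ne d_1d_2\cdots d_{n-2}$ (e.g.\ $Y:\ x_1=x_2=x_3$ for $D_n$, and the tables of \cite{OS:Cox} for the exceptional cases).
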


\begin{proof}
As stated above, this is a collection of observations from~\cite{M1} about 
Orlik--Solomon coexponents (see~\cite[Prop.~13(ii), Thm.~14(a)(k), Pf. of Thm.~14]{M1}) 
as we now explain. For irreducible Coxeter groups, Eq.~\eqref{Eq:Chamber:Count} 
was observed by Orlik and Solomon \cite[p.~271]{OS:Cox}. In general  
\cite[Thm.~14(g)(k)]{M1} is the equivalence  
\eqref{Eq:Count:2}$\Leftrightarrow$\eqref{Eq:Count:3}. 
So it remains to show that 
\eqref{Eq:Count:1} fails for $F_4$, $H_4$, $E_6$, $E_7$, $E_8$, and $D_\rk\ (\rk\geq 4)$. 
This is implicit in the proof of \cite[Thm.~14]{M1}. The 
exceptional cases $F_4$, $H_4$, $E_6$, $E_7$, and $E_8$ follow from \eqref{B:Chamber:Count} 
together with the tables of \cite{OS:Cox}, and 
as explained in the proof of \cite[Thm.~14]{M1},  
\eqref{Eq:Count:1} fails for $D_\rk$ since 
there is a certain fixed pace $Y$ defined by $x_1=x_2=x_3$ such that 
$B_Y(d_1-1)<d_1d_2\ldots d_{\rk-2}$. 
\end{proof}

\subsection{Another description of the Milnor fiber complex of the full monomial group}\label{S:MF:Comb}
Fix an integer ${m>1}$. The {\it full monomial group} $G(m,1,n)$ 
is the group of all $n$-by-$n$ monomial matrices (one nonzero 
entry in each row and column) whose nonzero entries are $m$-th 
roots of unity. Let $\zeta$ be a primitive $m$-th root of unity 
and denote by $e_k$ the standard column vector in $\CC^n$ with 
$1$ in the $i$-th spot and 0 elsewhere. In cycle notation, the 
standard generators $r_1,r_2,\ldots, r_n$ of $G(m,1,n)$ are the 
adjacent transpositions $(1\ 2),(2\ 3),\ldots,(n-1\ n)$, together 
with $(n\ \zeta n)$, where for example $(n\ \zeta n)$ is short 
for the $n$-by-$n$ matrix whose $i$-th column is $\zeta e_i$ if 
$i=n$ and $e_i$ otherwise. In general, a reflection is conjugate 
to a power of a generating reflection. This gives a total of $m$ 
conjugacy classes of reflections: one indexed by  $(n-1\ n)$ and 
the others by $(n\ \zeta^kn)$ for $1\leq k\leq m-1$. The Milnor 
fiber complex $\Delta$ of $G(m,1,n)$ is realized (see~\cite{O:Milnor})
as the union $\Delta=\cup gC$ of all translates $gC$ of the simplex 
\begin{equation}\label{Eq:Monomial:Chamber}
C=\{\alpha_1 b_1+\alpha_2 b_2+\ldots + \alpha_n b_n : 
\alpha_1+\ldots+\alpha_n=1,\ \text{$\alpha_i$ nonnegative real}\}
\end{equation}
where $b_k=(e_1+\ldots+e_k)/k$. This leads to the following 
convenient description of the Milnor fiber complex. 
The flag complex $\Delta(P)$ of a finite poset $P$ is the simplicial 
complex with  elements of $P$ for vertices and  
flags $\{x_1<x_2<\ldots<x_k : x_i\in P \}$ for simplices.

\begin{prop}\label{Proposition:Monomial:Comb}
Let $\Delta_n$ be the Milnor fiber complex of $G(m,1,n)$. 
Let $r$ be a reflection in $G(m,1,n)$. Let 
 $P_n$ be the collection of sets 
$\{\alpha_1e_{i_1},\ldots, \alpha_k e_{i_k}\}$
ordered by inclusion, 
where the $\alpha_i$'s are $m$-th roots of unity, $ i_1<\ldots<i_k$, and 
$1\leq k\leq n-1$. 
Then 
\begin{enumerate}[\rm(i)]
\item $\Delta_n$ is equivariantly isomorphic to the 
flag complex of $P_n$.\label{Monomial:Flag:Complex}
\item The subposet $P_n^r=\{X\in P_n : rX=X\}$ is isomorphic 
to $P_{n-1}$.\label{Monomial:Fixed:Poset}
\end{enumerate}
In particular, each wall of $\Delta_n$ is isomorphic 
to $\Delta_{n-1}$. 
\end{prop}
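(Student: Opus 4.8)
The plan is to establish parts (i) and (ii) and then read off the final clause. For (i) I would work directly with the realization $\Delta_n=\bigcup gC$ from~\cite{O:Milnor}. Every vertex $v$ of $\Delta_n$ is a vertex of some chamber $gC$, hence a point $gb_k\in\C{n}$ with exactly $k$ nonzero coordinates; send it to the set $\phi(v)$ of its nonzero coordinates, each rescaled by the factor $k$, so that $\phi(gb_k)=g\{e_1,\dots,e_k\}$. Then $\phi$ is well defined and injective because such a point is recovered from the set $\phi(v)$ (whose cardinality recovers $k$); it is onto $P_n$ because every monomial matrix with $m$-th-root-of-unity entries lies in $G(m,1,n)$; and it is $G(m,1,n)$-equivariant because it is intrinsic to the point while the group acts linearly. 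The geometric point is that the base chamber's vertices satisfy $\phi(b_1)\subset\phi(b_2)\subset\dots\subset\phi(b_n)$, a maximal flag of $P_n$; hence the vertex set of each chamber $gC$ maps to a maximal flag of $P_n$, and conversely every maximal flag arises exactly once (both sides are counted by $|G(m,1,n)|=m^n n!$). Since $\Delta_n$ is a chamber complex its simplices are precisely the subsets of vertex sets of chambers, so $\phi$ carries them bijectively onto the flags of $P_n$ and hence extends to a simplicial isomorphism $\Delta_n\cong\Delta(P_n)$. This proves~(i).

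For (ii) I would treat the two kinds of reflection of $G(m,1,n)$ separately. If $r$ is a rotation $(i\ \zeta^\ell i)$, $1\le\ell\le m-1$, then $r$ scales coordinate $i$ by $\zeta^\ell\neq 1$ and fixes the rest, so $X\in P_n$ is $r$-fixed exactly when no vector of $X$ is supported on coordinate $i$; deleting that coordinate and relabelling the remaining $n-1$ in order gives a poset isomorphism $P_n^r\cong P_{n-1}$. If $r$ is a transposition $(i\ j)$, then examining the support of $X$ (meeting $\{i,j\}$ in zero, one, or two points) shows $X$ is $r$-fixed exactly when $X$ avoids both coordinates $i,j$ or contains a matched pair $\alpha e_i,\alpha e_j$ with a common scalar. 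I would then define $f\colon P_n^r\to P_{n-1}$ by merging $i$ and $j$: replace a matched pair $\alpha e_i,\alpha e_j$ by the single $\alpha e_i$, delete coordinate $j$, and relabel the survivors in order. Its inverse re-splits any vector on the merged coordinate into a matched pair, so $f$ is a bijection $P_n^r\to P_{n-1}$; one then checks that $f$ and $f^{-1}$ preserve containment, the delicate point being that a set strictly contained in an $r$-fixed set carrying a matched pair at $i,j$ must, if $r$-fixed, either carry that same matched pair or avoid $\{i,j\}$ — this is exactly what prevents the collapse $f(X)=f(X\cup\{\alpha e_j\})$ inside $P_n^r$. Hence $P_n^r\cong P_{n-1}$ in all cases.

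The final clause is then formal. A wall of $\Delta_n$ is $\Delta_n^r$ for some reflection $r$ by Proposition~\ref{Prop:Walls:Desc}, and the equivariant isomorphism of~(i) identifies it with $\Delta(P_n)^r$. Since the $r$-action on $P_n$ preserves cardinalities of sets, $r$ cannot permute a flag of $P_n$ nontrivially (its members have distinct cardinalities), so $\Delta(P_n)^r=\Delta(P_n^r)$ — this is the argument of Proposition~\ref{Prop:fixed:pointwise}. By~(ii), and then by~(i) applied with $n-1$ in place of $n$, this equals $\Delta(P_{n-1})\cong\Delta_{n-1}$.

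I expect the main obstacle to be the transposition case of~(ii): getting the merge map and its inverse right and verifying that it is an order-isomorphism rather than merely an order-preserving bijection, that is, ruling out the collapse above by observing that the intermediate, non-$r$-fixed sets simply do not lie in $P_n^r$. Part~(i) is conceptually the heart of the matter, but granted the realization $\Delta_n=\bigcup gC$ it reduces to recognizing chambers as maximal flags together with a short count.
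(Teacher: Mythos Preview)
Your argument is correct and follows essentially the same route as the paper's: the same vertex map $gb_k\mapsto g\{e_1,\dots,e_k\}$ for (i), and the same deletion/merge description of $P_n^r$ for (ii), followed by the chain $\Delta_n^r\cong\Delta(P_n)^r=\Delta(P_n^r)\cong\Delta(P_{n-1})\cong\Delta_{n-1}$. The only difference is cosmetic: the paper first uses conjugacy to reduce (ii) to the two representatives $t=(n{-}1\ n)$ and $s=(n\ \xi n)$ and then writes the single map $X\mapsto X\setminus\CC e_n$, whereas you handle an arbitrary coordinate pair directly; your explicit checks that the merge map is an order-isomorphism and that $\Delta(P_n)^r=\Delta(P_n^r)$ just spell out steps the paper leaves implicit.
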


\begin{proof}
As an abstract simplicial complex 
 $\Delta_n$ is generated by the translates of the chamber $C=\{b_1,\ldots,b_n\}$. 
Consider the mapping that takes $b_k=(e_1+\ldots+e_k)/k$ to 
the set $\{e_1,\ldots,e_k\}$, so that a face $\{b_{i_1},\ldots, b_{i_k}\}$  of $C$
(where $i_1<i_2<\ldots<i_k$) goes to the flag 
\[\{e_1,\ldots, e_{i_1}\}\subset \{e_1,\ldots, e_{i_2}\}\subset
\ldots \subset \{e_1,\ldots,e_{i_k}\}.\]
Extend by the action of $G(m,1,n)$ to 
an isomorphism from $\Delta_n$ onto $\Delta(P_n)$. 
Hence~\eqref{Monomial:Flag:Complex}. 

The reflection $r$ is conjugate to either $t=(n-1\ n)$ or 
$s=(n\ \xi n)$ for some root of unity $\xi\neq 1$. 
Hence $P_n^r$ is isomorphic to either $P_n^s$ or $P_n^t$. 
We show that $P_n^s$ and $P_n^t$ are isomorphic to $P_{n-1}$. 
Consider $X\in P_n$ and write $X=\{\alpha_1e_{i_1},\ldots, \alpha_k e_{i_k}\}$. 
The reflection $t$ fixes $X$  
if and only if either $X$ contains both $\alpha e_{n-1}$ and $\alpha e_n$ 
for some $\alpha\in\CC$, or $X$ 
contains neither $\alpha e_{n-1}$ nor $\alpha e_n$ for any $\alpha\in\CC$. Hence 
the following is an isomorphism:
\[P_n^t\to P_{n-1}\text{ given by }X\mapsto X\setminus \CC e_n.\]
The set $X$ is fixed by $s$ if and only if $X=X\setminus \CC e_n$, so 
$P_n^s$ is isomorphic to $P_{n-1}$ as well. Hence~\eqref{Monomial:Fixed:Poset}.  
The last statement follows: 
$\Delta_n^r\cong\Delta(P_n^r)\cong \Delta(P_{n-1})\cong\Delta_{n-1}$. 
\end{proof}

\subsection{Proof of Theorem~\ref{Theorem:A}}\label{Section:Proof:A}
It suffices to assume that $G$ is irreducible by Proposition~\ref{Proposition:Reduction:A}.
\subsubsection*{Coxeter diagram with no subdiagram of type $D_4$, $F_4$, or $H_4$.} 
In this case, Abramenko tells us that each wall is a Coxeter complex,  
and hence a Milnor fiber complex.
\subsubsection*{Coxeter diagram with a subdiagram of type $D_4$, $F_4$, or $H_4$.} 
Since $\Delta$ is a Coxeter complex, Proposition~\ref{Lemma:Coxeter:Walls}
tells us that any wall of $\Delta$ that is a Milnor fiber complex must be 
a Coxeter complex. But Abramenko tells us that not all walls of $\Delta$ are 
Coxeter complexes.
Hence not all walls of $\Delta$ are Milnor fiber complexes. 
\subsubsection*{Full monomial groups $G(m,1,n)$ $(m\geq 2)$.} 
Proposition~\ref{Proposition:Monomial:Comb} says that 
any wall of the Milnor fiber complex of $G(m,1,n)$ is isomorphic to the Milnor fiber complex 
of ${G(m,1,n-1)}$. 
\subsubsection*{Groups of rank 1 and 2.} This case is clear.
\subsubsection*{The remaining exceptional groups: $G_{25},G_{26},G_{32}$.} 
Here we use Orlik's theorem (see \S\ref{SS:Top}), 
Proposition~\ref{Cor:Orlik}, and the cell counts 
of Theorem~\ref{Theorem:Chamber:Count}.

{\it Group $G_{25}$}. This is the rank-3 group with symbol
$3[3]3[3]3$. The basic degrees are $6,9,12$. 
Fix a wall $\Delta^r$. It is $1$-dimensional with $54$ chambers 
by~\eqref{Eq:Chamber:Count}, and it has   
the homotopy type of a $5^2$-fold bouquet of $1$-spheres by 
Orlik's theorem.  
If the wall is a Milnor fiber complex $\Delta(W,S)$, then 
because $\Delta(W,S)$ has dimension $|S|-1$ with chambers 
indexed by the elements of $W$, the group $W$ must be 
a rank-$2$ group with $54$ elements. If $W$ is 
reducible, then it  
must therefore be of the form $Z_j\times Z_k$ for some  
$j,k\in\NN$ such that $jk=54$ and 
$(j-1)(k-1)=5^2$ by Proposition~\ref{Cor:Orlik}. 
This is impossible. 
So $W$ must be irreducible with basic degrees $d_1,d_2$ satisfying 
 $d_1d_2=54$ and $(d_1-1)^2=5^2$. But from the classification 
(see Table~\ref{P:Table}) we find that there 
is no irreducible Coxeter or Shephard group of rank 2 
whose basic degrees are $6$ and $9$. So no wall of the Milnor fiber 
complex of $G_{25}$ is again a Milnor fiber complex.

{\it Group $G_{26}$}. 
This is the rank-3 group with symbol $3[3]3[4]2$. The basic degrees are $6,12,18$. 
Consider the wall $\Delta^{r_1}$ cut out by the generator $r_1$ that    
commutes with the one of order~$2$. It is $1$-dimensional with $72$ chambers and 
the homotopy type of a $5^2$-fold bouquet of $1$-spheres. 
If it is a Milnor fiber complex $\Delta(W,S)$, then $W$ is a rank-2 group 
with $72$ elements. $W$ must be irreducible because otherwise 
it is of the form $Z_j\times Z_k$ 
and there are no integers $j,k\in\NN$ such that $jk=72$ and $(j-1)(k-1)=5^2$. 
Thus $W$ is an irreducible rank-2 group whose basic degrees $d_1,d_2$ satisfy 
$d_1d_2=72$ and $(d_1-1)^2=5^2$. Hence $d_1=6$ and $d_2=12$. 
According to the classification (see Table~\ref{P:Table}) there 
are only two such groups: the group $G_5$ whose symbol is $3[4]3$, and 
the group $G(6,1,2)$ whose symbol is $2[4]6$. 

Vertices in $\Delta(W,S)$ are cosets $w\langle s_i\rangle$ in $W$ 
of the cyclic groups $\langle s_i\rangle$ for $s_i\in S$. Edges 
of $\Delta(W,S)$ are the cosets $\{w\}$ ($w\in W$) and 
the incidence relation is containment. So $\Delta(W,S)$ has  
 $|W|/|\langle s_i\rangle|$ vertices of degree $|\langle s_i\rangle|$ for 
$i=1,2$, and this accounts for all vertices.  
It follows from this discussion that 
the Milnor fiber complex of $G_5$ is 3-regular (all vertices have degree 3), and 
the Milnor fiber complex of $G(6,1,2)$ has $12$ vertices of degree $6$ 
and $36$ vertices of degree $2$. We claim that these vertex-degree 
distributions are different from 
the vertex-degree distribution in $\Delta^{r_1}$, and in turn 
$\Delta^{r_1}$ is not a Milnor fiber complex. 
To this end it is enough to show 
that there is a vertex of degree 4 in $\Delta^{r_1}$. 

Consider the vertex of $\Delta$ indexed by $H=\langle r_1,r_2\rangle$. 
This vertex is fixed (under left multiplication) by $r_1$ 
and therefore belongs to $\Delta^{r_1}$. We claim that it has 
degree 4 in $\Delta^{r_1}$. 
In $\Delta$ the edges incident to $H$ are the cosets of $\langle r_2\rangle$ 
and $\langle r_1\rangle$ in~$H$. The number of these cosets 
fixed by $r_1$ therefore equals the degree of $H$ as a vertex in $\Delta^{r_1}$. 
Since 
$H$ is the group $G_4$ with symbol $3[3]3$ whose 
basic degrees are $4$ and $6$, Eq.~\eqref{Eq:Chamber:Count} 
tells us that the number of these cosets in $H$ fixed by 
$r_1$ equals $4$. This concludes the present case.

{\it Group $G_{32}$}. 
This is the rank-4 group with symbol $3[3]3[3]3[3]3$ and basic degrees $12,18,24,30$. 
Fix a wall. It is $2$-dimensional with $5184$ chambers and 
the homotopy type of a $11^3$-fold bouquet of $2$-spheres. 
Suppose that the wall is a Milnor fiber complex $\Delta(W,S)$.  
Then $W$ must be a rank-3 group with $5184$ elements. 
It can not be a product of 3 rank-1 groups $Z_i\times Z_j\times Z_k$ 
because no $i,j,k\in\NN$ satisfy $ijk=5184$ 
and $(i-1)(j-1)(k-1)=11^3$. 
And it can not be the product of a rank-1 
group $Z_k$ and an irreducible rank-2 group $H$ because 
then for $d_1$ the smallest basic degree of $H$ we would have  
that $k|H|=5184$ and $(k-1)(d_1-1)^3=11^3$, so that $k=12$, $d_1=12$, and $|H|=432$, while 
the only irreducible rank-2 Coxeter or Shephard group with $432$ elements 
is a dihedral group whose smallest basic degree equals $2$. 
Hence $W$ must be an irreducible rank-3 Coxeter or Shephard group. 
Therefore the smallest basic degree $d_1$ of $W$ must satisfy 
$(d_1-1)^3=11^3$. But  Table~\ref{P:Table} shows that no irreducible rank-3 Coxeter or 
Shephard group has $5184$ elements and smallest basic degree $d_1$ equal 
to $12$.\qed

\section{Milnor walls}\label{S:Milnor:Walls}
\noindent
Write $\overline{\Omega}=\cup_{\sigma\in\Omega}\{\tau\in\Delta : \tau\subset \sigma\}$
for the simplicial complex 
generated by a family of simplices $\Omega\subset \Delta$. 
Let $\binom{R}{k}$ be the set of 
all $k$-element subsets of $R$, so that  
\[\binom{R}{k}=\{\type \sigma : \sigma\in \Delta,\ \dim \sigma=k-1\}.\]

\begin{Def}
A wall $\Delta^r$ of $\Delta$ is a {\it Milnor wall} 
if for some $\mathcal F\subset \binom{R}{n-1}$
the subcomplex 
\begin{equation}
(\Delta^r)_{\mathcal F} =\overline{\{\sigma\in \Delta^r : \type \sigma\in \mathcal F\}}
\end{equation}
is a Milnor fiber complex of dimension $\rk-2$. 
A {\it proper Milnor wall} is a Milnor wall 
that is not a Milnor fiber complex.
\end{Def}

Any wall $\Delta^r$ can be written 
as $(\Delta^r)_{\mathcal F}$ for $\mathcal F=\binom{R}{n-1}$. 
Therefore walls that are Milnor fiber complexes are Milnor walls. 
These are the non-proper Milnor walls. They 
are the only kind of Milnor wall found in Coxeter complexes. This 
is the following lemma.

\begin{lem}\label{Coxeter:Milnor:Walls}
Coxeter complexes have no proper Milnor walls.
\end{lem}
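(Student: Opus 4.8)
The claim is that a Coxeter complex has no proper Milnor wall, i.e., if $\Delta$ is a Coxeter complex and $(\Delta^r)_{\mathcal F}$ happens to be a Milnor fiber complex of dimension $n-2$ for some $\mathcal F\subset\binom{R}{n-1}$, then that Milnor fiber complex is in fact a Coxeter complex. By Proposition~\ref{Cor:Coxeter:Sphere} it suffices to show $(\Delta^r)_{\mathcal F}$ is a sphere of dimension $n-2$. The plan is to exploit the fact that the \emph{whole} wall $\Delta^r$ of a Coxeter complex is already a sphere: it is the ``equatorial'' subcomplex cut out on $S^{n-1}$ by the reflecting hyperplane of $r$, hence a triangulated $S^{n-2}$ (this is the classical picture behind Proposition~\ref{Lemma:Coxeter:Walls}, and it follows from Orlik's theorem in \S\ref{SS:Top} with $d_1=2$, which gives $(d_1-1)^p=1$ sphere).

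First I would record that $(\Delta^r)_{\mathcal F}\subset \Delta^r$ is a subcomplex of a triangulated $(n-2)$-sphere, and that by hypothesis it is itself a pure $(n-2)$-dimensional complex (being a Milnor fiber complex of dimension $n-2$, all of whose chambers have dimension $n-2$). A pure full-dimensional subcomplex of a triangulated sphere $S^{n-2}$ which is itself a closed pseudomanifold must be the whole sphere: concretely, $(\Delta^r)_{\mathcal F}$ and $\Delta^r$ have the same dimension, and a chamber complex of dimension $n-2$ sitting inside a connected $(n-2)$-manifold-like complex of the same dimension, sharing chambers, forces equality once we know the ambient complex is a sphere and the subcomplex is again a chamber complex with the ``every codimension-one face lies in exactly enough chambers'' property. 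The cleanest route is homological: $(\Delta^r)_{\mathcal F}$ is a Milnor fiber complex, so by Proposition~\ref{Cor:Orlik} its reduced homology is free and concentrated in degree $n-2$, of rank equal to a product $\prod_i(\delta_i-1)^{n_i}\ge 1$; meanwhile it embeds as a full-dimensional subcomplex of $S^{n-2}$. If it were a proper subcomplex, it would be a proper closed subcomplex of $S^{n-2}$ of the same dimension, which is homotopy equivalent to a wedge of $(n-3)$-spheres (removing at least one open top cell from $S^{n-2}$ and retracting), contradicting the concentration of its homology in degree $n-2$ unless $n-3 = n-2$, which is absurd, or unless nothing is removed. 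Hence $(\Delta^r)_{\mathcal F}=\Delta^r$, which is a sphere, so by Proposition~\ref{Cor:Coxeter:Sphere} it is a Coxeter complex; thus $\Delta^r$ is a non-proper Milnor wall.

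Alternatively, and perhaps more in keeping with the paper's style, I would argue by counting and connectivity rather than homology: $\Delta^r$ is a thin chamber complex (a pseudomanifold without boundary, being a triangulated sphere), so every $(n-3)$-face lies in exactly two chambers. If $(\Delta^r)_{\mathcal F}$ is a Milnor fiber complex of dimension $n-2$ it too is a chamber complex all of whose chambers are $(n-2)$-simplices; pick a chamber $C\in(\Delta^r)_{\mathcal F}$ and a chamber $C'\in\Delta^r$ sharing an $(n-3)$-face with $C$ — the gallery-connectivity of $\Delta^r$ together with the fact that the chambers of a Milnor fiber complex exhaust all maximal simplices of its underlying complex forces $C'\in(\Delta^r)_{\mathcal F}$ as well, and iterating shows $(\Delta^r)_{\mathcal F}$ contains every chamber of $\Delta^r$, hence equals it. Either way the conclusion is the same.

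\textbf{Main obstacle.} The delicate point is the transition from ``$(\Delta^r)_{\mathcal F}$ is abstractly a Milnor fiber complex of dimension $n-2$'' to ``$(\Delta^r)_{\mathcal F}$ is a full-dimensional \emph{closed pseudomanifold} subcomplex of the sphere $\Delta^r$,'' i.e.\ verifying that its chamber-complex structure is compatible with the ambient thinness so that the propagation/gluing argument actually closes up. One must be careful that $(\Delta^r)_{\mathcal F}$, defined by type-selection and then taking the generated subcomplex, really is pure of dimension $n-2$ and that its ``walls'' (codimension-one faces) behave correctly inside $\Delta^r$; this is where the hypothesis that it is a \emph{Milnor fiber complex of dimension $\rk-2$}, as opposed to some lower-dimensional or non-pure complex, does the essential work, via Proposition~\ref{Cor:Orlik} for the homology bookkeeping.
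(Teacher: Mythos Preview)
Your argument is correct and follows essentially the same route as the paper's proof: both observe that the wall $\Delta^r$ of a Coxeter complex is an $(n-2)$-sphere, that an $(n-2)$-dimensional Milnor fiber complex is a nontrivial wedge of $(n-2)$-spheres, and that deleting any top-dimensional simplex from a triangulated sphere kills its top homology, forcing $(\Delta^r)_{\mathcal F}=\Delta^r$. The paper compresses this homological point into three lines; your version spells it out (and adds an unnecessary but correct gallery-connectivity alternative), and your opening paraphrase of the claim is slightly off---the lemma asks that $\Delta^r$ itself be a Milnor fiber complex, not that $(\Delta^r)_{\mathcal F}$ be a Coxeter complex---but since you establish $(\Delta^r)_{\mathcal F}=\Delta^r$ this is harmless.
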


\begin{proof}
A wall in an $(n-1)$-dimensional Coxeter complexes is an $(n-2)$-sphere, 
and an $(n-2)$-dimensional Milnor fiber complex is a bouquet of $(n-2)$-spheres. 
Therefore, removing any chambers from a wall of an $(n-1)$-dimensional Coxeter complex gives 
something that is not a Milnor fiber complex of dimension $\rk-2$.
\end{proof}

\begin{prop}\label{Corollary:Milnor:Coxeter:Walls}
A wall of a Coxeter complex is a Milnor wall if and only if the wall is 
a Coxeter complex. 
\end{prop}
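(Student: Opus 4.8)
The plan is to deduce this proposition with essentially no new work, by combining Lemma~\ref{Coxeter:Milnor:Walls} with Proposition~\ref{Lemma:Coxeter:Walls} and unwinding the definition of a Milnor wall.

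For the backward implication I would argue as follows. Suppose a wall $\Delta^r$ of the Coxeter complex $\Delta$ is itself a Coxeter complex. Then $\Delta^r$ is a Milnor fiber complex, and since it is the equator of the $(n-1)$-sphere $\Delta$ it is an $(n-2)$-sphere; in particular it has dimension $n-2$ and is a chamber complex, so every simplex of $\Delta^r$ lies in an $(n-2)$-chamber. Consequently $(\Delta^r)_{\mathcal F}=\Delta^r$ for the choice $\mathcal F=\binom{R}{n-1}$, and with this $\mathcal F$ the subcomplex $(\Delta^r)_{\mathcal F}$ is an $(n-2)$-dimensional Milnor fiber complex. This is exactly the condition for $\Delta^r$ to be a Milnor wall.

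For the forward implication, suppose $\Delta^r$ is a Milnor wall. By Lemma~\ref{Coxeter:Milnor:Walls} the Coxeter complex $\Delta$ has no proper Milnor walls, so $\Delta^r$ is not a proper Milnor wall, which by definition means $\Delta^r$ is a Milnor fiber complex. Proposition~\ref{Lemma:Coxeter:Walls}, which says that a wall of a Coxeter complex that is a Milnor fiber complex must be a Coxeter complex, then finishes the argument. I do not expect any genuine obstacle here: the real content has already been isolated in Lemma~\ref{Coxeter:Milnor:Walls} (a sphere cannot become a nontrivial bouquet of spheres after deleting chambers) and in Proposition~\ref{Lemma:Coxeter:Walls} (via the sphere characterization of Coxeter complexes in Proposition~\ref{Cor:Coxeter:Sphere}). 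The only point requiring a little care is the bookkeeping in the easy direction: one must record that a wall of an $(n-1)$-dimensional Coxeter complex truly has dimension $n-2$ and is a full chamber complex, so that the type-selected subcomplex with $\mathcal F=\binom{R}{n-1}$ recovers the whole wall and not merely a proper subcomplex of it.
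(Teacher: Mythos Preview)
Your proposal is correct and follows essentially the same route as the paper: the forward implication uses Lemma~\ref{Coxeter:Milnor:Walls} to rule out proper Milnor walls and then Proposition~\ref{Lemma:Coxeter:Walls} to upgrade ``Milnor fiber complex'' to ``Coxeter complex,'' while the backward implication is treated as immediate (the paper simply notes it is clear, whereas you spell out the choice $\mathcal F=\binom{R}{n-1}$ and the dimension check).
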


\begin{proof}
Suppose $\Sigma$ is a Milnor wall of a Coxeter complex. 
Then Lemma~\ref{Coxeter:Milnor:Walls} implies that 
$\Sigma$ is a Milnor fiber complex. 
Since $\Sigma$ is a wall of a Coxeter complex, it follows 
from Proposition~\ref{Lemma:Coxeter:Walls} that $\Sigma$ 
is a Coxeter complex.
The other direction is clear: 
a wall of a Coxeter complex that is itself a Coxeter complex is a 
(non-proper) Milnor wall. 
\end{proof}

\begin{prop}\label{Reduction:B}
Each wall of $\Delta(G,R)$ is a Milnor wall if and only if 
each wall of each $\Delta(G_i,R_i)$ is a Milnor wall. 
\end{prop}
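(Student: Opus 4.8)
The plan is to mimic the proof of Proposition~\ref{Proposition:Reduction:A}, using the join decomposition \eqref{Wall:Join} together with the behavior of links (Proposition~\ref{Lemma:Products}) and the join structure of type-selected subcomplexes. Recall that every wall of $\Delta(G,R)$ has, up to isomorphism, the form
\[
\Sigma=\Delta(G_1,R_1)*\cdots*\Delta(G_i,R_i)^{t}*\cdots*\Delta(G_N,R_N),\quad t\in G_i\text{ a reflection},
\]
so it suffices to show that such a $\Sigma$ is a Milnor wall precisely when $\Delta(G_i,R_i)^{t}$ is a Milnor wall of $\Delta(G_i,R_i)$. The key observation is that the type function on $\Delta$ is the union of the type functions on the join factors (Theorem~\ref{Theorem:Definition}\eqref{Join:Isom}), so for a family $\mathcal F\subset\binom{R}{n-1}$ the type-selected subcomplex $\Sigma_{\mathcal F}$ interacts with the join in a controlled way.

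First I would treat the easy direction. Suppose $\Delta(G_i,R_i)^{t}$ is a Milnor wall, witnessed by $\mathcal F_i\subset\binom{R_i}{n_i-1}$ with $(\Delta(G_i,R_i)^{t})_{\mathcal F_i}$ a Milnor fiber complex $\Delta(G_i',R_i')$ of dimension $n_i-2$. Then take
\[
\mathcal F=\{\,R_1\cup\cdots\cup I_i\cup\cdots\cup R_N : I_i\in\mathcal F_i\,\},
\]
a family of $(n-1)$-element subsets of $R$. A chamber of $\Sigma$ has type in $\mathcal F$ iff its $G_i$-component has type in $\mathcal F_i$ and all other components are full chambers, so $\Sigma_{\mathcal F}\cong\Delta(G_1,R_1)*\cdots*\Delta(G_i',R_i')*\cdots*\Delta(G_N,R_N)$, which by Theorem~\ref{Theorem:Definition}\eqref{Join:Isom} is the Milnor fiber complex of $G_1\times\cdots\times G_i'\times\cdots\times G_N$, and its dimension is $(n_1-1)+\cdots+(n_i-2)+\cdots+(n_N-1)=n-2$. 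Hence $\Sigma$ is a Milnor wall.

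For the converse, suppose $\Sigma$ is a Milnor wall, witnessed by some $\mathcal F\subset\binom{R}{n-1}$ with $\Sigma_{\mathcal F}$ a Milnor fiber complex of dimension $n-2$. I would argue as in Proposition~\ref{Proposition:Reduction:A}: pass to an appropriate link. Concretely, choose a chamber $c$ of $\Sigma_{\mathcal F}$ and let $\sigma$ be the codimension-$1$ face of $c$ obtained by deleting the vertex whose type-label lies in $R_j$ for some fixed $j\neq i$ (such a vertex exists since $n_j\geq 1$); then pass to the link, repeating over all $j\neq i$ and over all the $n_j$ vertices in each such block, to cut $\Sigma_{\mathcal F}$ down to its $G_i$-factor. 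By Proposition~\ref{Lemma:Products} each link of $\Sigma_{\mathcal F}$ is again a Milnor fiber complex, and tracking types shows the resulting subcomplex of $\Delta(G_i,R_i)^{t}$ has the form $(\Delta(G_i,R_i)^{t})_{\mathcal F_i}$ for a suitable $\mathcal F_i\subset\binom{R_i}{n_i-1}$, of dimension $n_i-2$. This exhibits $\Delta(G_i,R_i)^{t}$ as a Milnor wall of $\Delta(G_i,R_i)$.

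The main obstacle is the bookkeeping in the converse direction: one must verify that the type-selection $\mathcal F$ on the join ``factors through'' the join decomposition well enough that iterated links land inside a single block $\Delta(G_i,R_i)^{t}$ and carry a type-selected structure there. The subtlety is that $\mathcal F$ need not be a product family $\prod\mathcal F_j$ a priori; what saves the argument is that $\Sigma_{\mathcal F}$ must have full dimension $n-2$, which forces, for each block $j\neq i$, that $\mathcal F$ contains a chamber-type whose $j$-component is all of $R_j$, and then the link construction isolates the $i$-component cleanly. I would make this precise by showing that deleting a vertex of type in $R_j$ from a chamber of $\Sigma_{\mathcal F}$ and taking the link has the effect of replacing the $j$-th join factor $\Delta(G_j,R_j)$ by the link of a vertex, i.e.\ by $\Delta(G_{R_j\setminus\{r\}},R_j\setminus\{r\})$, while leaving the other factors—including the $G_i$-factor and its type-selection—untouched; iterating kills every factor except the $i$-th, leaving exactly $(\Delta(G_i,R_i)^{t})_{\mathcal F_i}$.
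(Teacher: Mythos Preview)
Your forward direction is exactly the paper's argument. For the converse you have the right endgame (apply Proposition~\ref{Lemma:Products} to a link inside the Milnor fiber complex $\Sigma_{\mathcal F}$), but you are missing the one observation that dissolves the ``bookkeeping obstacle'' you flag, and without it your iterated-link plan does not obviously go through.

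The point is this: in the wall
\[
\Delta^r\cong\Delta(G_1,R_1)*\cdots*\Delta(G_i,R_i)^{t}*\cdots*\Delta(G_N,R_N),
\]
the factor $\Delta(G_i,R_i)^{t}$ has dimension $n_i-2$, hence contains \emph{no} simplex of type $R_i$. Consequently $\Delta^r$ contains no simplex of type $R\setminus\{s\}$ for $s\in R_j$ with $j\neq i$ (such a simplex would project to a simplex of type $R_i$ in the $i$-th factor). Therefore any witnessing family $\mathcal F\subset\binom{R}{n-1}$ may be taken to satisfy $\mathcal F\subset\{R\setminus\{s\}:s\in R_i\}$, i.e.\ $\mathcal F$ \emph{is} a product family from the outset. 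With $\mathcal F_i=\{S\cap R_i:S\in\mathcal F\}$ one gets the join decomposition
\[
(\Delta^r)_{\mathcal F}=\Delta(G_1,R_1)*\cdots*(\Delta(G_i,R_i)^{t})_{\mathcal F_i}*\cdots*\Delta(G_N,R_N),
\]
and a single link (at any simplex of type $R\setminus R_i$) plus Proposition~\ref{Lemma:Products} finishes the converse.

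Your scheme of peeling off factors by iterated links is not wrong in spirit, but the claim that each link ``replaces the $j$-th factor by $\Delta(G_{R_j\setminus\{r\}},R_j\setminus\{r\})$ while leaving the $G_i$-factor and its type-selection untouched'' presupposes exactly the join form above; you cannot verify it without first knowing that every chamber type in $\mathcal F$ contains all of $R_j$ for $j\neq i$, which is the observation you skipped. Once you make that observation, the iteration is unnecessary.
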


\begin{proof}
Consider a wall 
$\Delta^r=\Delta(G_1,R_1) * \ldots *\Delta(G_i,R_i)^t * \ldots * \Delta(G_N,R_N)$, 
so that $t\in G_i$ is a reflection, and write $\rk_i=|R_i|$. 

Suppose 
$\Delta(G_i,R_i)^t$ is a Milnor wall, so that 
$(\Delta(G_i,R_i)^t)_{\mathcal F_i}$ is a Milnor fiber complex of dimension 
$\rk_i-2$ for some $\mathcal F_i\subset \binom{R_i}{n_i-1}$.
Put $\mathcal F=\{(R\setminus R_i) \cup S: S\in \mathcal F_i\}$, so that  
\begin{equation}\label{Eq:r:F}
(\Delta^r)_{\mathcal F}=
\Delta(G_1,R_1) * \ldots *(\Delta(G_i,R_i)^t)_{\mathcal F_i}* \ldots * \Delta(G_N,R_N).
\end{equation}
Since Theorem~\ref{Theorem:Definition}\eqref{Join:Isom}
tells us that a join of Milnor fiber complexes is again a Milnor fiber complex,
it follows that $(\Delta^r)_{\mathcal F}$
is a Milnor fiber complex of dimension $\rk-2$. Hence $\Delta^r$ is a Milnor wall. 

Now suppose $\Delta^r$ is a Milnor wall. Then 
$(\Delta^r)_{\mathcal F}$ is an $(\rk-2)$-dimensional Milnor fiber complex 
for some $\mathcal F\subset \binom{R}{n-1}$. 
The subcomplex $\Delta(G_i,R_i)^t$ has dimension $|R_i|-2$, and hence no simplex of type $R_i$. 
So the wall $\Delta^r$ has no simplex of type $R\setminus \{s\}$ for $s\in R_j$, $j\neq i$. 
Hence $\mathcal F\subset \{R\setminus\{s\}: s\in R_i\}$. 
But then  
for $\mathcal F_i=\{S\cap R_i: S\in \mathcal F\}$ we have \eqref{Eq:r:F}.
Since $(\Delta^r)_{\mathcal F}$ is an $(\rk-2)$-dimensional Milnor fiber complex 
in which $(\Delta(G_i,R_i)^t)_{\mathcal F_i}$ is the link of any simplex of 
type $R\setminus R_i$, it follows from Proposition~\ref{Lemma:Products} that 
$(\Delta(G_i,R_i)^t)_{\mathcal F_i}$ is a Milnor fiber complex of dimension $n_i-2$. 
So $\Delta(G_i,R_i)^t$ is a Milnor wall. 
\end{proof}

\subsection{Proof of Theorem~\ref{Theorem:B}} 
It suffices to assume that $G$ is irreducible by Proposition~\ref{Reduction:B}.

Suppose that the diagram of $G$ contains no subdiagram of type $D_4$, $F_4$, or $H_4$, 
and that $G$ is not $G_{25}$, $G_{26}$, or $G_{32}$. 
Then Theorem~\ref{Theorem:A} tells us that each wall 
of $\Delta$ is a Milnor fiber complex. Hence each wall of $\Delta$ is a Milnor wall.

Suppose now the diagram of $G$ contains a subdiagram of type $D_4$, $F_4$, or $H_4$.  
By the irreducibility of $G$ and the classification (Table~\ref{P:Table}),  
$G$ must be a Coxeter group. 
Then Abremenko's result tells us that 
not all walls of $\Delta$ are Coxeter complexes, 
and  
Proposition~\ref{Corollary:Milnor:Coxeter:Walls}
tells us that the Milnor walls of $\Delta$ are 
the walls of $\Delta$ that are again Coxeter complexes.
So in this case we conclude that not all walls of $\Delta$ are Milnor walls.

Finally suppose $G$ is $G_{25}$, $G_{26}$, or $G_{32}$. 
The object is to show that all walls are Milnor walls. 
The remainder of the proof explains how this claim follows 
from Orlik's original construction of the Milnor fiber complex $\Delta$ 
together with some observations made by Coxeter about the regular 
complex polytopes associated to $G_{25}$, $G_{26}$, and $G_{32}$.

A regular complex polytope~\cite[p.~115]{C} is a certain collection $\mathscr P$ 
of affine subspaces in $\CC^\rk$ with incidence 
relation given by proper inclusion subject to some conditions.  
The symmetry group of a regular complex polytope is a Shephard group $G$  
and the Milnor fiber complex of the Shephard group is 
constructed in \cite{O:Milnor} as a geometric realization of 
the flag complex of the poset of simplices of $\mathscr P$, 
whose $k$-simplices are the flags 
${\mathscr F}=(F^{(0)}\subsetneq F^{(1)}\subsetneq \ldots \subsetneq F^{(k)})$ 
of faces ${F^{(i)}\in \mathscr P}$; see \cite[Thm.~5.1]{O:Milnor}.
Index the generators of the Shephard group starting with $0$ instead of $1$, 
so that ${R=\{r_0,r_1,\ldots,r_{\rk-1}\}}$. Then 
a flag $\mathscr F$ corresponds in the Milnor fiber complex 
to a coset of $\langle R\setminus\{r_{\dim F}:F\in \mathscr F\}\rangle$, 
whose type is $\{r_{\dim F} : F\in \mathscr F\}$; see \cite{M0}.

Coxeter considered the {\it section 
of $\mathscr P$ by a reflecting hyperplane $H$}. This is the set 
\begin{equation}
\{F\in\mathscr P : F\subset F'\subset H\ \text{for some }F'\in\mathscr P\ \text{such that }
\dim F'=n-2\}.\end{equation}
He observed the following~\cite[pp. 123, 132]{C}:   
if $G=G_{25}$, then each section of $\mathscr P$ by a reflecting hyperplane 
is a regular complex polytope for $G(3,1,2)$; 
if $G=G_{26}$, then each section of $\mathscr P$ by a reflecting hyperplane is 
a regular complex polytope for either $G(3,1,2)$ or $G(6,1,2)$; 
and if $G=G_{32}$, then each section of $\mathscr P$ by a reflecting hyperplane is 
a regular complex polytope for $G_{26}$. 
These observations translate into the following statements about 
$\Sigma=(\Delta(G,R)^r)_{\mathcal F}$ 
for $\mathcal F=\{R\setminus\{r_{n-1}\}\}$ and $r\in G$ a reflection.
\begin{enumerate}[a.]
\item If $G=G_{25}$, then $\Sigma$ is isomorphic to the Milnor fiber complex 
of $G(3,1,2)$.
\item If $G=G_{26}$, then $\Sigma$ is isomorphic to 
the Milnor fiber complex of $G(3,1,2)$ or $G(6,1,2)$.
\item If $G=G_{32}$, then $\Sigma$ is isomorphic to the Milnor fiber 
complex of $G_{26}$. 
\end{enumerate}
Hence if $G$ is $G_{25}$, $G_{26}$, or $G_{32}$, then all walls of $\Delta(G,R)$ are Milnor walls. 
\qed

\begin{table}
\centering
\begin{tabular}{l@{\qquad\qquad}l@{\qquad\qquad}l}
\toprule
$G$ & Symbol/diagram & Basic degrees \\
\midrule
$Z_m$ & $m$& $m$ \\
$I_2(2m)$  & $2[2m]2$ &$2,2m$ \\
$I_2(2m-1)$ & $2[2m-1]2$&$2,2m-1$\\ 
$G_4$ & $3[3]3$& $4,6$ \\
$G_5$ & $3[4]3$& $6,12$ \\
$G_6$ & $3[6]2$& $4,12$ \\
$G_8$ & $4[3]4$& $8,12$\\
$G_9$ & $4[6]2$& $8,24$\\
$G_{10}$ & $4[4]3$& $12,24$\\
$G_{14}$ & $3[8]2$& $6,24$\\
$G_{16}$ & $5[3]5$& $20,30$\\
$G_{17}$ & $5[6]2$& $20,60$\\
$G_{18}$ & $5[4]3$& $30,60$\\
$G_{20}$ & $3[5]3$& $12,30$\\
$G_{21}$ & $3[10]2$& $12,60$\\
$G_{23}$ \text{\,\,\rm($H_3$)} & $2[3]2[5]2$& $2,6,10$\\ 
$G_{25}$ & $3[3]3[3]3$& $6,9,12$ \\
$G_{26}$ & $3[3]3[4]2$& $6,12,18$ \\
$G_{28}$ \text{\,\,\rm($F_4$)} & $2[3]2[4]2[3]2$& $2,6,8,12$ \\
$G_{30}$ \text{\,\,\rm($H_4$)} & $2[3]2[3]2[5]2$& $2,12,20,30$\\
$G_{32}$ & $3[3]3[3]3[3]3$& $12,18,24,30$ \\
$G_{35}$ \text{\,\,\rm($E_6$)} &
\raisebox{0pt}[20pt][5pt]{\begin{dynkin}
    \foreach \x in {1,...,5}
    {
        \dynkindot{\x}{0}
    }
    \dynkindot{3}{1}
    \dynkinline{1}{0}{5}{0}
    \dynkinline{3}{0}{3}{1}
  \end{dynkin}}
 & $2,5,6,8,9,12$ \\
$G_{36}$ \text{\,\,\rm($E_7$)}& 
\raisebox{0pt}[15pt][5pt]{\begin{dynkin}
    \foreach \x in {1,...,6}
    {
        \dynkindot{\x}{0}
    }
    \dynkindot{3}{1}
    \dynkinline{1}{0}{6}{0}
    \dynkinline{3}{0}{3}{1}
  \end{dynkin}}
& $2,6,8,10,12,14,18$ \\
$G_{37}$ \text{\,\,\rm($E_8$)}& 
\raisebox{0pt}[15pt][10pt]{\begin{dynkin}
    \foreach \x in {1,...,7}
    {
        \dynkindot{\x}{0}
    }
    \dynkindot{3}{1}
    \dynkinline{1}{0}{7}{0}
    \dynkinline{3}{0}{3}{1}
  \end{dynkin}}
&$2,8,12,14,18,20,24,30$ \\
$A_n$ & $2[3]2\ldots 2[3]2[3]2$ & $2,3,\ldots,n+1$ \\
$D_{n+1}$ & 
\raisebox{-4pt}[20pt][15pt]{\begin{dynkin}
    \foreach \x in {1,2,3,5,6}
    {
        \dynkindot{\x}{0}
    }
    \dynkindot{7}{-.5}
    \dynkindot{7}{.5}
    \dynkinline{1}{0}{2}{0}
    \dynkinline{2}{0}{3}{0}
    \fill (1.6cm,0) node {$\normalsize\ldots$};
    \dynkinline{5}{0}{6}{0}
    \dynkinline{6}{0}{7}{.5}
    \dynkinline{6}{0}{7}{-.5}
  \end{dynkin}}
& $2,4,\ldots,2n,n+1$\\
$G(m,1,n)$ & $2[3]2\ldots 2[3]2[4]m$ & $m,2m,\ldots,nm$ \\
\bottomrule
\end{tabular}
\caption{
($m\geq 2$) 
The finite irreducible Coxeter and Shephard groups, 
their diagrams, and their basic degrees.}
\label{P:Table}
\end{table}

\end{document}